\title{Generalized De Bruijn Words, Invertible Necklaces, and the Burrows--Wheeler Transform} %
\titlerunning{Generalized De Bruijn Words, Invertible Necklaces, and the BWT} %
\author{Gabriele Fici
}{Dipartimento di Matematica e Informatica, Università di Palermo, Palermo, Italy \and \url{https://www.unipa.it/persone/docenti/f/gabriele.fici/en/} }{gabriele.fici@unipa.it}{https://orcid.org/0000-0002-3536-327X}{Supported by MIUR project PRIN 2022 APML – 20229BCXNW}
\author{Estéban Gabory}{Dipartimento di Matematica e Informatica, Università di Palermo, Palermo, Italy \and \url{https://www.unipa.it/persone/docenti/g/esteban.gabory/en} }{esteban.gabory@unipa.it}{https://orcid.org/0000-0002-9897-1512}{Supported by MIUR project PRIN 2022 APML – 20229BCXNW}%
 \authorrunning{Gabriele Fici and Estéban Gabory} %
\keywords{Burrows--Wheeler Transform, Generalized de Bruijn Word, Generalized de Bruijn Graph, Circulant Matrix, Invertible Necklace, Sandpile Group, Reutenauer Group.} %
\newcommand{\BWT}{\textsc{BWT}}
\DeclareMathOperator{\Tr}{Tr}
\newcommand{\RG}{RG}
\newcommand{\Z}{\mathbb{Z}}
\DeclareMathOperator{\DB}{DB}
\DeclareMathOperator{\GF}{\mathbb{F}}
\DeclareMathOperator{\ord}{ord}
\DeclareMathOperator{\SNF}{SNF}
\DeclareMathOperator{\Lyn}{\overline{N}}
\DeclareMathOperator{\Neck}{N}
\DeclareMathOperator{\InvNeck}{\hat{N}}
\DeclareMathOperator{\DBW}{DBW}
\DeclareMathOperator{\CM}{\mathcal{C}}
\begin{document}

\maketitle

\begin{abstract}
We define generalized de Bruijn words as those words having a Burrows--Wheeler transform that is a concatenation of permutations of the alphabet. We show that generalized de Bruijn words are in 1-to-1 correspondence with Hamiltonian cycles in the generalized de Bruijn graphs, introduced in the early '80s in the context of network design. When the size of the alphabet is a prime $p$, we define invertible necklaces as those whose BWT-matrix is non-singular. We show that invertible necklaces of length $n$ correspond to normal bases of the finite field $\GF_{p^n}$, and that they form an Abelian group isomorphic to the Reutenauer group $\RG_p^n$.
Using known results in abstract algebra, we can make a bridge between generalized de Bruijn words and invertible necklaces. In particular, we highlight a correspondence between binary de Bruijn words of order $d+1$, binary necklaces of length $2^{d}$ having an odd number of $1$'s, invertible BWT matrices of size $2^{d}\times 2^{d}$, and normal bases of the finite field $\GF_{2^{2^{d}}}$.
\end{abstract}

\section{Introduction}
\label{sec:intro}

The Burrows--Wheeler matrix of a word $w$ is the matrix whose rows are the conjugates (rotations) of $w$ in ascending lexicographic order. Its last column is called the  Burrows--Wheeler transform of $w$, and has the property of being easier to compress when the input word is a repetitive text. For this reason, it is largely used in textual data compression, in particular in bioinformatics~\cite{Li2010-ih,Langmead2009-zd,Langmead2012-wd}. But the  Burrows--Wheeler transform also has many interesting combinatorial properties (see~\cite{DBLP:conf/cie/RosoneS13,bwtsurvey} for a survey). Since the Burrows--Wheeler transform is the same for any conjugate of $w$, it can be viewed as a map from the set of necklaces (conjugacy classes of words) to the set of words.

A \emph{de Bruijn word} of order $d$ over an alphabet $\Sigma$ is a necklace of length $|\Sigma|^d$ such that each of the $|\Sigma|^d$ distinct words of length $d$ occurs exactly once in it, as a cyclic factor. Since each of the $|\Sigma|^{d-1}$ distinct words of length $d-1$ occurs as a prefix of $|\Sigma|$ consecutive rows of the Burrows--Wheeler matrix of a de Bruijn word, the  Burrows--Wheeler transform of a de Bruijn word is characterized (among words that are images under the Burrows--Wheeler transform) by the fact that it is a concatenation of $|\Sigma|^{d-1}$ alphabet-permutations (de Bruijn words of order $1$). This motivates us to define a \emph{generalized de Bruijn word} as one whose Burrows--Wheeler transform is a concatenation of (any number of) alphabet-permutations.

As is well known, de Bruijn words are in $1$-to-$1$ correspondence with Hamiltonian cycles in de Bruijn graphs. We show that, analogously,  generalized de Bruijn words are in $1$-to-$1$ correspondence with Hamiltonian cycles in \emph{generalized de Bruijn graphs}, introduced in the early $80$'s independently by Imase and Itoh~\cite{DBLP:journals/tc/ImaseI81}, and by Reddy, Pradhan, and Kuhl~\cite{Reddy} (see also~\cite{DBLP:journals/networks/DuH88}) in the context of network design. The generalized de Bruijn graph $\DB(k,n)$  has vertices $\{0,1,\ldots,n-1\}$ and for every vertex $m$ there is an edge from $m$ to $km+i \mod (n)$ for every $i=0,1,\ldots,k-1$.

In particular, we provide a new and simple interpretation of the well-known correspondence between de Bruijn words and Hamiltonian cycles in de Bruijn graphs in terms of the \emph{inverse standard permutation} of the  Burrows--Wheeler transform, and we show that this interpretation extends to the generalized case.

The other class of words we introduce is that of \emph{invertible necklaces}. We call an aperiodic necklace over an alphabet of prime size $p$ invertible if its Burrows--Wheeler matrix is nonsingular, i.e., has determinant nonzero modulo $p$. We show that each invertible necklace of length $n$, as a set of vectors over the base field $\GF_p$, corresponds to a normal base of the field $\GF_{p^n}$.
Invertible necklaces can also be represented by conjugacy classes of their circulant matrices, and they form an Abelian group, called the \emph{Reutenauer group} $\RG_p^{n}$~\cite{DuzhinJMS}. The set of invertible necklaces of a given length $n$ can therefore be endowed with a multiplication operation that makes it isomorphic to the $n$-th Reutenauer group.

Using a known result in abstract algebra~\cite{CTR01}, we show that every aperiodic necklace of length $n$ with non-zero weight modulo $p$ (the weight of a word is the sum of its digits) is invertible if and only if $n$ is either a power of $p$ or a $p$-rooted prime. However, it is an open problem to decide whether there are infinitely many lengths $n$, different from a power of $p$, for which every aperiodic necklace of length $n$ with non-zero weight modulo $p$ has an invertible Burrows--Wheeler matrix. This seems to be a challenging problem, as it is related to Artin's conjecture on primitive roots.

In the last part of the paper, we show a connection between generalized de Bruijn words and invertible necklaces.
Chan, Hollmann and Pasechnik~\cite{ECCGTA13} proved that for every prime $p$ and any $n$, one has  $\RG_p^{n} \cong K(\DB(p,n)) \oplus \Z_{p-1}$, where $K(G)$ denotes the sandpile group of the graph $G$. Since the structure of the sandpile group of an Eulerian graph is determined by the invariant factors of its Laplacian matrix, Chan et al.~gave the precise structure of sandpile groups of generalized de Bruijn graphs, and hence of Reutenauer groups.

In particular, when $p=2$ and $n=2^d$, we show that there is a bijection between de Bruijn words of order $d$ and binary necklaces of length $2^{d-1}$ with an odd number of $1$'s. %

\paragraph*{Our contributions}

We introduce two new classes of circular words: generalized de Bruijn words and invertible necklaces, both of which are defined in terms of the Burrows–Wheeler transform. We show that generalized de Bruijn words correspond to those whose Burrows–Wheeler transform has an inverse standard permutation that labels a Hamiltonian cycle in a generalized de Bruijn graph (Theorem~\ref{thm:charGen}). This result allows us to derive a formula for counting the number of generalized de Bruijn words using the BEST theorem (Theorem~\ref{thm:counting}).

Next, we focus on alphabets of prime size $p$. We define invertible necklaces as those whose Burrows–Wheeler matrix is non-singular. We demonstrate that the conjugacy classes of invertible necklaces form an Abelian group isomorphic to the Reutenauer group of invertible circulant matrices. This leads to a characterization of the equivalence of various properties of invertible necklaces (Theorem~\ref{thm:invneckeq}). %

In the final section, we connect generalized de Bruijn words and invertible necklaces by using a previously known isomorphism between the sandpile groups of generalized de Bruijn graphs and Reutenauer groups. We further speculate on the implications of this isomorphism, and end by showing a bijection between binary de Bruijn words, invertible necklaces, and normal bases of finite fields (Theorem~\ref{thm:final2}).

\paragraph*{Related work}

Several generalizations of de Bruijn words (sequences) have been proposed in the literature~\cite{Blum83,DBLP:journals/dm/Au15,DBLP:conf/cpm/NelloreW22}, all based on the number of occurrences of factors (substrings) of some kind.

The relation between the BWT and de Bruijn words has been studied in several papers. In~\cite{DBLP:journals/tcs/Higgins12}, the author extended the BWT to multisets of necklaces and characterized de Bruijn sets—generalizations of de Bruijn words—by their BWT image, leading to a BWT-based characterization of de Bruijn words when the multiset has a single element. In~\cite{DBLP:conf/latin/LiptakP24}, this correspondence is used to design an efficient algorithm for constructing arbitrary de Bruijn words.

Recently, the algebraic structures generated by invertible BWT matrices have also been explored. In~\cite{DBLP:journals/corr/abs-2409-07974,DBLP:journals/corr/abs-2409-09824}, the authors showed that BWT matrices of Christoffel words are closed under multiplication and described the multiplicative group of invertible BWT matrices of Christoffel words.
Christoffel words are the unbordered factors of aperiodic infinite words of minimal complexity (Sturmian words)~\cite{Book08}. The conjugacy classes of Christoffel words are precisely the binary aperiodic necklaces whose BWT has the minimal possible number of equal-letter runs~\cite{DBLP:journals/ipl/MantaciRS03}.

\paragraph*{Paper organization}

Our paper is structured as follows. In Section~\ref{sec:prel}, we introduce the necessary preliminaries on words, necklaces, and the Burrows–Wheeler transform. Section~\ref{sec:db} defines generalized de Bruijn words and establishes their graph-theoretic characterization using generalized de Bruijn graphs, which extends the classical correspondence between de Bruijn words and de Bruijn graphs. In Section \ref{sec:invertible}, we introduce the concept of an invertible necklace and relate it to circulant matrices and Reutenauer groups. Section~\ref{sec:final} connects these concepts, using known group isomorphisms to show a bijection between generalized de Bruijn words and invertible necklaces in the context of prime-sized alphabets. Finally, in Section~\ref{sec:conclusion}, we summarize our findings and propose open problems and future directions.

\section{Preliminaries}\label{sec:prel}

We begin by introducing some preliminary definitions %
related to strings and words. For a thorough introduction, we refer the reader to \cite{crochemore_algorithms_2007}, and \cite{lothaire_combinatorics_1983}.

Let $\Sigma_k=\{0,1,\ldots,k-1\}$, $k>1$, be a sorted set of \emph{letters}. 
  
A \emph{word} over the alphabet $\Sigma_k$ is a concatenation of elements of $\Sigma_k$. The \emph{length} of a word $w$ is denoted by $|w|$. %
 For a letter $i\in\Sigma_k$, $|w|_i$ denotes the number of occurrences of $i$ in $w$. The vector $(|w|_{0},\ldots,|w|_{k-1})$ is the \emph{Parikh vector} of $w$.

Let  $w=w_0\cdots w_{n-2}w_{n-1}$ be a word of length $n>0$. The \emph{weight} of $w$ is  $\sum_{j=0}^{n-1}w_j$. When $n>1$, the \textit{shift} of $w$ is the word $\sigma(w)=w_{n-1}w_0\cdots w_{n-2}$.

For a word $w$, the \emph{$n$-th power} of $w$ is the word $w^n$ obtained by concatenating $n$ copies of $w$. %

\begin{definition}
    We call an \emph{alphabet-permutation}  a word over $\Sigma_k$ that contains each letter of $\Sigma_k$ exactly once, and an \emph{alphabet-permutation power} a concatenation of one or more alphabet-permutations over $\Sigma_k$.
\end{definition}

\begin{example}
    The word $w=210201102102120$ is an alphabet-permutation power over $\Sigma_3$.
\end{example}

Notice that an alphabet-permutation power of length $n$ has a balanced Parikh vector, i.e., a Parikh vector of the form $(\frac{n}{k},\frac{n}{k},\ldots,\frac{n}{k})$.

In the binary case, a word $w$ is an alphabet-permutation power if and only if $w=\tau(v)$, where $v$ is a binary word of length $|w|/2$ and $\tau$ is the \emph{Thue--Morse morphism}, the substitution that maps $0$ to $01$ and $1$ to $10$.

Two words $w$ and $w'$ are \emph{conjugates} if $w=uv$ and $w'=vu$ for some words $u$ and $v$. The conjugacy class of a word $w$ can be obtained by repeatedly applying the shift operator, and contains $|w|$ distinct elements if and only if $w$ is \emph{primitive}, i.e., for any nonempty word $v$ and integer $n$, $w= v^n$ implies $n=1$. %
A \emph{necklace} (resp.~\emph{aperiodic necklace}) $[w]$ is a conjugacy class of words (resp.~of \emph{primitive words}). Necklaces are also called \emph{circular words}. Notice that, given a word $w$, the weight of $w$ is invariant under shift, hence we can define the \emph{weight} $wt([w])$ of the necklace $[w]$ as the weight of any of its representatives.

For example, $[1100]=\{1100,0110,0011,1001\}$ is an aperiodic necklace, while $[1010]=\{1010,0101\}$ is not aperiodic.

The number of aperiodic necklaces of length $n$ over $\Sigma_k$ is given by
\[\Lyn(k,n)=\frac{1}{n}\sum_{d | n}\mu\left(\frac{n}{d}\right)k^d\]
where $\mu(n)$ is the M\"obius function, defined by: $\mu(1)=1$, $\mu(n)=(-1)^j$ if $n$ is the product of $j$ distinct primes or $0$ otherwise, i.e., if $n$ is divisible by the square of a prime number.

The number of  necklaces of length $n$ over $\Sigma_k$ is given by
\[\Neck(k,n)=\sum_{d|n}\Lyn(k,d)=\frac{1}{n}\sum_{d | n}\phi\left(\frac{n}{d}\right)k^d\]
where $\phi(n)$ is the Euler's totient function. Recall that the Euler's totient function $\phi(n)$ counts the number of positive integers less than or equal to $n$ and coprime with $n$, i.e., the order of the multiplicative group $\mathbb{Z}_n^*$, and can be computed using the formula
\[\phi(n)=n\prod_{p|n}\left(1-\dfrac{1}{p}\right)\]
for $n>1$, and $\phi(1)=1$, where the product is taken over the distinct primes dividing $n$.

\bigskip

 The \textit{Burrows--Wheeler matrix} (BWT matrix) of a necklace $[w]$ is the matrix whose rows are the $|w|$ shifts of $w$ in ascending\footnote{This is the standard convention in the literature. Of course, one can also choose the descending lexicographic order, the properties are symmetric.} lexicographic order. Let us denote by $F$ and $L$, respectively, the first and the last column of the BWT matrix of $[w]$. We have that $F=0^{n_0}1^{n_1}\cdots (k-1)^{n_{k-1}}$, where $(n_0,\ldots, n_{k-1})$ is the Parikh vector of $[w]$; $L$, instead, is the \emph{Burrows--Wheeler Transform} (BWT) of $[w]$. The BWT is therefore a map from the set of necklaces to the set of words. As is well known, it is an injective map (we describe below how to invert it). A word is a \emph{BWT image} if it is the BWT of some necklace. 

The \emph{standard permutation}  of a word $u=u_0u_1\cdots u_{n-1}$,  $u_i\in\Sigma_k$, is the permutation $\pi_u$ of $\{0,1,\ldots,n-1\}$ such that $\pi_u(i)<\pi_u(j)$ if and only if $u_i<u_j$ or  $u_i=u_j$ and $i<j$. In other words, in one-line notation, $\pi_u$ orders distinct letters of $u$ lexicographically, and equal letters by occurrence order, starting from $0$. %
When $u$ is the BWT image of some necklace, the standard permutation is also called $LF$-mapping. As is well known, a word $u$ is a BWT image of an aperiodic necklace if and only if $\pi_{u}$ is a length-$n$ cycle.

The \emph{inverse standard permutation} $\pi^{-1}_u$ of a word $u$, written in one-line notation, can be obtained by listing in left-to-right order the positions of $0$ in $u$, then the positions of $1$, and so on. The inverse standard permutation of a BWT image is also called $FL$-mapping.

An aperiodic necklace can be uniquely reconstructed from its BWT, using either the standard permutation or its inverse. To do this, it is convenient to write these permutations in cycle form.
Let $L$ be the BWT of $[w]$, and $\pi_L=(j_0\, j_1\, \cdots \, j_{n-1})$ be the standard permutation of $L$ in cycle form, with $j_0=0$. Then, $L_{j_{n-1}}L_{j_{n-2}}\cdots L_{j_0}$ is the first row of the BWT matrix of $[w]$, i.e., the lexicographically least element of the necklace $[w]$. This is also equal to $F_{i_0}F_{i_1}\cdots F_{i_{n-1}}$, where $\pi^{-1}_L=(i_0\, i_1\, \cdots \, i_{n-1})$ is the inverse standard permutation of $L$ in cycle form,  with $i_0=0$. %

The following remark will be used in later sections:
\begin{remark}\label{rmk:balancedLF}
    In the case of a balanced Parikh vector $(\frac{n}{k},\frac{n}{k},\ldots,\frac{n}{k})$, then, one can reverse the BWT from its inverse standard permutation in cycle form $\pi_u^{-1}=(i_0\, i_1\, \cdots \, i_{n-1})$, $i_0=0$, by mapping $i_{\ell}$ to  $\left \lfloor \dfrac{i_{\ell}}{n/k}\right \rfloor$ for $\ell=0,1,\ldots,n-1$. 
\end{remark}

For example, the BWT  of $[w]=[220120011]$ is the word $u=202001121$, whose 
 inverse standard permutation in cycle form is $\pi_u^{-1}=(0\, 1\, 3\, 5\, 8\, 7\, 2\, 4\, 6)$. Mapping: $0,1,2$ to $0$; $3,4,5$ to $1$; $6,7,8$ to $2$, one obtains the word $001122012$, the first row of the BWT matrix of $[w]$.

Necklaces that are not aperiodic can also be reconstructed from their BWT, as a consequence of the following result:

\begin{proposition}[{\cite[Proposition~2]{DBLP:journals/ipl/MantaciRS03}}]\label{nonprimitive}
\label{le:bwtpower}
For any $c\geq 1$, $u=u_1u_2\cdots u_n$ is the BWT of an aperiodic necklace $[w]$ if and only if $u_1^cu_2^c\cdots u_n^c$ is the BWT of $[w^c]$.
\end{proposition}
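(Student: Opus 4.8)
The plan is to prove the forward implication by a direct computation of the BWT matrix of $w^c$, and then to obtain the converse for free from the injectivity of the BWT. Fix a primitive representative $w=w_0\cdots w_{n-1}$ of the aperiodic necklace $[w]$, and suppose its BWT is $u=u_1\cdots u_n$. The first observation is that $w^c$ has length $cn$ and period $n$: among its $cn$ shifts, exactly $n$ are distinct, each occurring precisely $c$ times, and these distinct shifts are exactly the words $v^c$, where $v$ ranges over the $n$ shifts of $w$. (Since $w$ is primitive, its $n$ shifts are pairwise distinct, and $v_1^c=v_2^c$ would force $v_1=v_2$ by reading the first $n$ letters.)

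The key step is to check that taking $c$-th powers preserves the lexicographic order of shifts. If $v_1$ and $v_2$ are distinct shifts of $w$, then, being distinct words of the same length $n$, they first differ at some position $j<n$; this is then also the first position at which $v_1^c$ and $v_2^c$ differ, so $v_1<_{\mathrm{lex}}v_2$ if and only if $v_1^c<_{\mathrm{lex}}v_2^c$. Consequently, if $r_1<_{\mathrm{lex}}\cdots<_{\mathrm{lex}}r_n$ are the shifts of $w$ in ascending order, then sorting the $cn$ shifts of $w^c$ produces $c$ consecutive copies of $r_1^c$, followed by $c$ copies of $r_2^c$, and so on up to $c$ copies of $r_n^c$. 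The last letter of $r_i^c$ equals the last letter of $r_i$, which by definition is $u_i$; reading the last column of the BWT matrix of $w^c$ from top to bottom therefore yields $u_1^c u_2^c\cdots u_n^c$. This proves that if $u$ is the BWT of $[w]$, then $u_1^c\cdots u_n^c$ is the BWT of $[w^c]$.

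For the converse, I would invoke injectivity of the BWT map. Let $u'=u'_1\cdots u'_n$ be the BWT of $[w]$ (which exists and has length $n$). By the forward implication just established, the BWT of $[w^c]$ equals $(u'_1)^c (u'_2)^c\cdots (u'_n)^c$. If we assume instead that the BWT of $[w^c]$ equals $u_1^c u_2^c\cdots u_n^c$, then, since the BWT is a well-defined map, $(u'_1)^c\cdots (u'_n)^c=u_1^c\cdots u_n^c$; comparing these two words letter by letter (or simply noting that the map $x_1\cdots x_n\mapsto x_1^c\cdots x_n^c$ is injective) gives $u'=u$, so $u$ is the BWT of $[w]$.

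The only delicate point is the bookkeeping in the first two paragraphs: one must correctly account for the multiplicity $c$ of each distinct shift of $w^c$ and verify that the power map preserves lexicographic order. Once this is in place, the statement reduces to the elementary injectivity of the letter-repetition map together with the already-cited injectivity of the BWT; I expect no further obstacle.
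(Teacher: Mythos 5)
Your argument is correct and complete. Note that the paper does not prove this statement at all --- it imports it verbatim as Proposition~2 of Mantaci, Restivo and Sciortino --- so there is no in-paper proof to compare against; your write-up is exactly the standard argument: the shifts of $w^c$ are the $c$-th powers $(\sigma^i(w))^c$ each with multiplicity $c$, the map $v\mapsto v^c$ preserves lexicographic order on words of length $n$, and the last column therefore reads $u_1^c\cdots u_n^c$. Two minor remarks: primitivity of $w$ is not actually needed for the forward computation (it only guarantees the distinct shifts are distinct, which the order-preservation argument never uses), and in the converse what you invoke is the well-definedness of the BWT map together with injectivity of $x_1\cdots x_n\mapsto x_1^c\cdots x_n^c$, not injectivity of the BWT itself --- your parenthetical ``since the BWT is a well-defined map'' is the correct justification, while the closing reference to ``injectivity of the BWT'' is superfluous.
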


 Let $ G = (V, E)$ be a finite directed graph, which may have loops and multiple edges (in this case, it is also called a multigraph in the literature). Each edge
$e \in E$ is directed from its source vertex $s(e)$ to its target vertex $t(e)$. 

The directed \emph{line graph} (or \emph{edge graph}) $\mathcal{L} G =(E, E')$ of $G$ has as vertices the edges of $G$, and as edges the set
$$E' = \{(e_1, e_2) \in E \times E \mid s(e_2) = t(e_1)\}.$$

An oriented \emph{spanning tree} of $G$ is a subgraph containing all of the vertices of $G$, having no directed cycles, in which one vertex, the root, has outdegree $0$, and every other vertex has outdegree $1$. The number $\kappa(G)$ of oriented spanning trees of $G$ is sometimes called the \textit{complexity} of $G$.%

A directed graph is \emph{Hamiltonian} if it has a \emph{Hamiltonian cycle}, i.e., one that traverses each node exactly once, while it is \emph{Eulerian} if it has an \emph{Eulerian cycle}, i.e., one that traverses each edge exactly once. As is well known, a directed graph is Eulerian if and only if $indeg(v) = outdeg(v)$ for all vertices $v$. If a graph is Eulerian, its line graph is Hamiltonian.

\section{Generalized de Bruijn graphs and generalized de Bruijn words}\label{sec:db}

We start by briefly discussing (ordinary) de Bruijn graphs and de Bruijn words, and relating them to the inverse standard permutation of the Burrows--Wheeler transform.

Recall that the de Bruijn graph $\DB(\Sigma_k,k^d)$ of order $d$ is the directed graph whose vertices are the words of length $d$ over $\Sigma_k$ and there is an edge from $a_iw$ to $v$, where $a_i$ is a letter, if and only if $v=wa_j$ for some letter $a_j$. As is well known, de Bruijn graphs are Eulerian and Hamiltonian. One has $\mathcal{L}\DB(\Sigma_k,k^d) = \DB(\Sigma_k,k^{d+1})$. 

A \emph{de Bruijn word} of order $d$ over $\Sigma_k$ is a necklace over $\Sigma_k$ containing as a circular factor each of the $\Sigma_k^d$ words of length $d$ over $\Sigma_k$ exactly once. De Bruijn words correspond to Hamiltonian cycles in $\DB(\Sigma_k,k^d)$, as they can be obtained by concatenating the first letter of the labels of the vertices it traverses.

Now, since the vertex labels of $\DB(\Sigma_k,k^d)$ are the base-$k$ representations of the integers in $\{0,1,\ldots,k^d-1\}$ on $d$ digits (padded with leading zeroes), the de Bruijn graph of order $d$ over $\Sigma_k$ can be equivalently defined as the one having vertex set $\{0,1,\ldots,k^d-1\}$ and containing an edge from $m$ to $km+i \mod (k^d)$ for every $i=0,1,\ldots,k-1$. We let $\DB(k,k^d)$ denote this equivalent representation of the de Bruijn graph of order $d$.
A Hamiltonian cycle in $\DB(k,k^d)$ is then a cyclic permutation of $\{0,1,\ldots,k^d-1\}$. 
The relation between a de Bruijn word of order $d$ over $\Sigma_k$, i.e., a  Hamiltonian cycle in $\DB(\Sigma_k,k^d)$, and the corresponding Hamiltonian cycle in $\DB(k,k^d)$, is given in the following lemma.

\begin{lemma}\label{lem:dbw as FL}
Let $[w]=[w_0\cdots w_{k^d-1}]$ be the necklace encoding a Hamiltonian cycle in $\DB(\Sigma_k,k^d)$, and let $[u]=[u_0\cdots u_{k^d-1}]$ be the corresponding Hamiltonian cycle in $\DB(k,k^d)$, namely, the one such that $w_i$ is the $k$-ary expansion on $d$ digits of $u_i$ for each $i=0,\ldots, k^d-1$. Then $[u]$ is the inverse standard permutation of the Burrows--Wheeler transform of $[w]$, in cycle form. In particular, one has $w_i=\lfloor u_i/k^{d-1} \rfloor$ for every $0\leq i\leq k^d-1$.
\end{lemma}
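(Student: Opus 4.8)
The plan is to unpack both objects explicitly and check that the standard-permutation reconstruction recipe from the preliminaries reproduces exactly the de Bruijn graph edge relation. The key identity to prove is $w_i=\lfloor u_i/k^{d-1}\rfloor$, and once that is established the claim that $[u]$ is the inverse standard permutation in cycle form will follow by tracking how the BWT reconstruction and the $\DB(k,k^d)$ edge relation describe the same successor function.

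**First I would** fix notation on the two graphs. A Hamiltonian cycle in $\DB(\Sigma_k,k^d)$ visits each length-$d$ word once; consecutive vertices $a_i v$ and $v a_j$ overlap in their length-$(d-1)$ suffix/prefix, and $[w]$ is read off as the concatenation of the first letters $w_i$ of the vertices in the order they are visited. Meanwhile the corresponding cycle $[u]$ in $\DB(k,k^d)$ visits the integers $u_i$ whose $k$-ary expansions are exactly those vertices, so the edge relation $u_{i+1}=ku_i+j \bmod k^d$ encodes precisely the overlap condition. The top digit of the base-$k$ expansion of $u_i$ on $d$ digits is $\lfloor u_i/k^{d-1}\rfloor$, and since $w_i$ is by definition the leading letter of that expansion, the identity $w_i=\lfloor u_i/k^{d-1}\rfloor$ is immediate from the digit interpretation. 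This gives the ``in particular'' clause almost for free.

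**The main work** is showing that $[u]$ really is the inverse standard permutation of $\mathrm{BWT}([w])$ written as a single cycle. Here I would use the characterization of the $FL$-mapping from the preliminaries: $\pi_L^{-1}$ lists, in increasing-row order within each letter block, the rows of the BWT matrix, and reconstruction reads the least conjugate as $F_{i_0}F_{i_1}\cdots$. Because $[w]$ is a de Bruijn word, each length-$(d-1)$ word is a prefix of exactly $k$ consecutive rows of the BWT matrix, so the rows of the matrix are naturally indexed by length-$d$ words sorted lexicographically, i.e.\ by the integers $0,1,\ldots,k^d-1$; the row indexed by the integer $m$ has label equal to the $k$-ary expansion of $m$. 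I would then check that the $FL$-step sending a row starting with letter $w_i$ to the appropriate row in the $F$-column block matches the map $m\mapsto km+j\bmod k^d$ governing edges in $\DB(k,k^d)$, invoking Remark~\ref{rmk:balancedLF} for the balanced-Parikh-vector case where the floor map $i_\ell\mapsto\lfloor i_\ell/(k^d/k)\rfloor=\lfloor i_\ell/k^{d-1}\rfloor$ recovers $w$ from $\pi_u^{-1}$.

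**The hard part will be** pinning down the exact indexing bijection between BWT-matrix rows and vertices of $\DB(k,k^d)$ and verifying that the $FL$-successor of a row coincides with the de Bruijn edge relation, rather than its inverse or a shifted variant; a sign or off-by-one in the correspondence between ``prepend a letter'' (the $FL$ direction) and ``multiply by $k$ and add'' is exactly where the argument could slip. I would make this rigorous by writing the expansion $u_i=\sum_{t=0}^{d-1} c_t k^t$ and checking that $ku_i+j\bmod k^d$ drops the top digit and appends $j$ as the new bottom digit, which is precisely the transition from the word $w_i w_{i+1}\cdots$ read along the Hamiltonian cycle; comparing this against the explicit $FL$-reconstruction of the least conjugate should close the loop. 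Once the indexing is fixed, the two successor functions are visibly the same permutation, so $[u]=\pi_L^{-1}$ in cycle form as claimed.
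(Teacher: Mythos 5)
Your proposal is correct and follows essentially the same route as the paper: establish $w_i=\lfloor u_i/k^{d-1}\rfloor$ from the leading digit of the base-$k$ expansion, and recover $[w]$ from the cycle form of $\pi^{-1}_{\BWT([w])}$ via Remark~\ref{rmk:balancedLF} using the balanced Parikh vector. The paper's proof is just a two-sentence compression of this; the extra bookkeeping you supply (indexing BWT-matrix rows by the integers whose base-$k$ expansions are their length-$d$ prefixes, and checking that the $FL$-successor ``drop the top digit, append a new bottom digit'' is exactly $m\mapsto km+j\bmod k^d$) is the correct way to make explicit what the paper leaves to the well-known de Bruijn correspondence.
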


\begin{proof}
For a given integer $n$ in $\{0,\ldots,  k^d-1\}$, the first digit of $n$ in its base-$k$ representation on $d$ digits is $\lfloor \frac{n}{k^{d-1}} \rfloor$. The claim then follows directly from Remark~\ref{rmk:balancedLF} %
and the fact that de Bruijn words have a balanced Parikh vector.
\end{proof}

\begin{example}
In $\DB(\Sigma_2,8)$, the de Bruijn word $[w]=[00010111]$ corresponds to the Hamiltonian cycle obtained by visiting the nodes $000$, $001$, $010$, $101$, $011$, $111$, $110$, $100$; and to the Hamiltonian cycle $(0,1,2,5,3,7,6,4)$ in $\DB(2,8)$. Indeed, one has $\BWT([w])=10011010$, and $\pi^{-1}_{\BWT([w])}=(0\, 1\, 2\, 5\, 3\, 7\, 6\, 4)$, in cycle notation. By applying the mapping $i\mapsto \lfloor\frac{i}{4}\rfloor$, one retrieves $[w]=[00010111]$.
\end{example}

We now introduce generalized de Bruijn words.  In~\cite{DBLP:journals/tcs/Higgins12}, Higgins showed that a word $w$ of length $k^n$ over $\Sigma_k$  is a (ordinary) de Bruijn word if and only if its BWT is an alphabet-permutation power. This motivates us to introduce the following definition:

\begin{definition}
A necklace of length $kn$ over $\Sigma_k$ is a \emph{generalized de Bruijn word} if its BWT is an alphabet-permutation power over $\Sigma_k$.
\end{definition}

For example, $[02201331]$ is a generalized de Bruijn word since its BWT is $21302031$.

Notice that, in the literature, there already exist several other definitions of generalized de Bruijn words~(see, e.g.,~\cite{Blum83,DBLP:journals/dm/Au15,DBLP:conf/cpm/NelloreW22}).

The \emph{generalized de Bruijn graph} $\DB(k,n)$ has vertices $\{0,1,\ldots,n-1\}$ and for every vertex $m$ there is an edge from $m$ to $km+i \mod (n)$ for every $i=0,1,\ldots,k-1$. Generalized de Bruijn graphs have been introduced independently by Imase and Itoh~\cite{DBLP:journals/tc/ImaseI81}, and by Reddy, Pradhan, and Kuhl~\cite{Reddy} (see also~\cite{DBLP:journals/networks/DuH88}). They are Eulerian and Hamiltonian. By definition, a generalized de Bruijn graph $\DB(k,n)$ is an ordinary de Bruijn graph when $n=k^d$ for some $d>0$. As an example, the generalized de Bruijn graph $\DB(3,6)$ is displayed in Fig.~\ref{fig:DB36}.

\begin{figure}
    \centering
  \includegraphics[width=11cm]{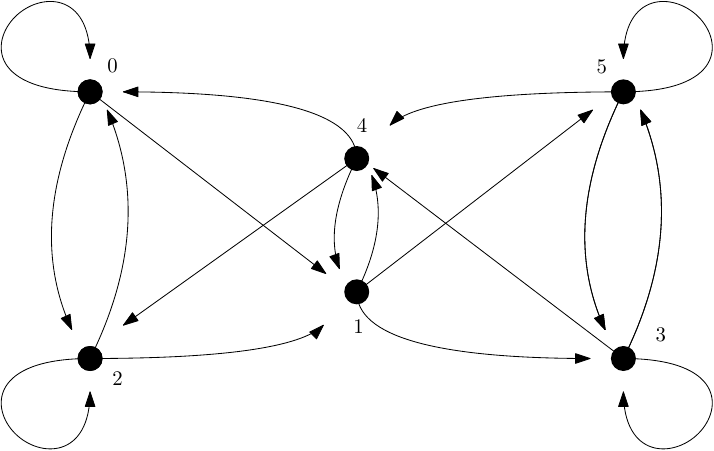}
    \caption{The generalized de Bruijn graph $\DB(3,6)$. }
    \label{fig:DB36}
\end{figure}

The line graph of $\DB(k,n)$ is $\DB(k,kn)$~\cite{DBLP:journals/dm/LiZ91}. %
The Eulerian cycles of $\DB(k,n)$ correspond to the Hamiltonian cycles of $\DB(k,kn)$.%

In Theorem~\ref{thm:charGen}, we give a characterization of generalized de Bruijn words in terms of generalized de Bruijn graphs, which generalizes Lemma~\ref{lem:dbw as FL}.

\begin{lemma}\label{lem:APW-char}
Let $u$ be a word of length $kn$ over $\Sigma_k$, with Parikh vector $(n,n,\ldots, n)$. The following statements are equivalent:
\begin{enumerate}
    \item\label{APW-char1} The word $u$ is an alphabet-permutation power;
    \item\label{APW-char2} For each $0\le j< k$, $0\le i< n$, one has $\pi^{-1}_u(i+jn)\in[ki,k(i+1)-1]$; %
   \item\label{APW-char3} For each $0\le \ell< kn$, there is an edge from $\ell$ to $\pi^{-1}_u(\ell)$ in $\DB(k,kn)$. 
\end{enumerate}
\end{lemma}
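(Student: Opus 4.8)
The plan is to prove the three-way equivalence by establishing a chain of implications, and the cleanest route is to show that statement~\ref{APW-char2} is really just a reformulation of both the other two conditions. First I would unpack the definitions carefully. The hypothesis is that $u$ has the perfectly balanced Parikh vector $(n,n,\ldots,n)$, so the $F$ column of its BWT matrix reads $0^n1^n\cdots(k-1)^n$. Recall that the inverse standard permutation $\pi^{-1}_u$, written in one-line notation, lists the positions of the $0$'s in $u$ left-to-right, then the positions of the $1$'s, and so on. Thus the values $\pi^{-1}_u(0),\ldots,\pi^{-1}_u(n-1)$ are exactly the positions in $u$ carrying letter $0$ (in increasing order), the next block of $n$ values are the positions of the $1$'s, and in general for $0\le j<k$ the block $\pi^{-1}_u(jn),\ldots,\pi^{-1}_u((j+1)n-1)$ enumerates, in increasing order, the positions where $u$ equals $j$.

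For the equivalence \ref{APW-char1}$\Leftrightarrow$\ref{APW-char2}, I would argue directly from the block structure. By definition $u$ is an alphabet-permutation power exactly when, for each $0\le i<n$, the length-$k$ window $u_{ki}u_{ki+1}\cdots u_{k(i+1)-1}$ contains every letter of $\Sigma_k$ exactly once. Equivalently, each such window contributes precisely one occurrence of each letter $j$. Since the positions of the letter $j$ are enumerated in increasing order by $\pi^{-1}_u(jn+i)$ as $i$ ranges over $0,\ldots,n-1$, saying that the $i$-th occurrence of $j$ falls in the $i$-th window $[ki,k(i+1)-1]$ for every $i$ and every $j$ is exactly condition~\ref{APW-char2}. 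The key observation making this biconditional is a counting/monotonicity argument: there are $n$ windows and $n$ occurrences of each letter, the occurrences are listed in increasing order, and the windows partition $\{0,\ldots,kn-1\}$ into consecutive blocks; so if each window were to receive at most one copy of each letter it would have to receive exactly one, and the increasing enumeration forces the $i$-th occurrence into the $i$-th window. Conversely, condition~\ref{APW-char2} placing the $i$-th occurrence of every letter into window $i$ means each window gets all $k$ letters once.

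The equivalence \ref{APW-char2}$\Leftrightarrow$\ref{APW-char3} is essentially a translation through the adjacency rule of the generalized de Bruijn graph. In $\DB(k,kn)$, the out-neighbours of a vertex $\ell$ are precisely the integers $k\ell+t \bmod (kn)$ for $t=0,\ldots,k-1$; equivalently, there is an edge from $\ell$ to $m$ if and only if $\lfloor m/k\rfloor \equiv \ell \pmod{n}$, since $m\in\{k\ell,\ldots,k\ell+k-1\} \bmod (kn)$ is the same as saying $m$ lies in one of the length-$k$ blocks whose quotient by $k$ reduces to $\ell$ mod $n$. I would rewrite condition~\ref{APW-char2}: writing $\ell=i+jn$ with $0\le i<n$ and $0\le j<k$, the requirement $\pi^{-1}_u(\ell)\in[ki,k(i+1)-1]$ says exactly that $\lfloor \pi^{-1}_u(\ell)/k\rfloor = i = \ell \bmod n$, which by the adjacency characterization is precisely the statement that $\ell\to\pi^{-1}_u(\ell)$ is an edge of $\DB(k,kn)$. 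Ranging $\ell$ over all of $\{0,\ldots,kn-1\}$ gives condition~\ref{APW-char3}.

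I expect the main obstacle to be pinning down condition~\ref{APW-char2} cleanly, that is, making the monotonicity/counting step in \ref{APW-char1}$\Leftrightarrow$\ref{APW-char2} airtight rather than hand-wavy. The subtlety is that one direction only hands us ``each window receives at most one copy of each letter,'' and one must invoke the exact balance (the total count of each letter equals the number of windows) together with the fact that the $\pi^{-1}_u$-blocks list occurrences in strictly increasing order to upgrade ``at most one'' to ``exactly one'' and to match the $i$-th occurrence with the $i$-th window. Once the block description of $\pi^{-1}_u$ is stated explicitly, the other equivalence \ref{APW-char2}$\Leftrightarrow$\ref{APW-char3} is a purely bookkeeping rewriting of the floor-and-modulus adjacency condition, so I would keep it short and spend the care on the first equivalence.
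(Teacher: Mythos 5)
Your proposal is correct and follows essentially the same route as the paper's proof: it establishes \ref{APW-char1}$\Leftrightarrow$\ref{APW-char2} via the block description of $\pi^{-1}_u$ (the $i$-th occurrence of each letter landing in the $i$-th length-$k$ window), and \ref{APW-char2}$\Leftrightarrow$\ref{APW-char3} by rewriting the out-neighbour condition of $\DB(k,kn)$ for $\ell=i+jn$ as membership in $[ki,k(i+1)-1]$. The counting/monotonicity step you flag as the delicate point is exactly the one the paper also spells out, so no gap remains.
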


\begin{proof}
We first observe that $(\ref{APW-char3})\iff (\ref{APW-char2})$. By definition, the edges of the de Bruijn graph $\DB(k, kn)$ are of the form $(\ell, k\ell + i' \bmod kn)$ for $0 \le i' < k$. In particular, for any $\ell = i + jn$ with $0 \le i < n$ and $0 \le j < k$, we have $k(i + jn) + i' \bmod kn \in [ki, k(i+1) - 1]$. Thus, $\pi_u^{-1}(\ell)$ lies in this interval if and only if there is an edge from $\ell$ to $\pi_u^{-1}(\ell)$ in $\DB(k, kn)$. %
Next, we show that (\ref{APW-char1}) implies (\ref{APW-char2}). We start with the following general remark: for a word $u$ and an integer $0\le \ell\leq |u|-1$, one has $\pi^{-1}_u(\ell)=p$ if $p$ is the position of the $\ell$th letter in $u$, according to the order defined by $\pi^{-1}_u$, which is the lexicographic order with equal letters sorted by occurrence position. Suppose $u$ is an alphabet-permutation power, so $u$ consists of $n$ consecutive blocks of length $k$, each of which is a permutation of $\Sigma_k$. In particular, each letter appears exactly $n$ times in total. Let $i < n$ and $j < k$. The index $\pi^{-1}_u(i+jn)$ corresponds to the $(i+1)$th occurrence of the letter $j$ in $u$. Since $u$ is a concatenation of $n$ blocks, and each block is a permutation of $\Sigma_k$, the $(i+1)$th occurrence of letter $j$ must appear within the $(i+1)$th block of $u$, i.e., within positions $[ki, k(i+1)-1]$. Therefore, $\pi_u^{-1}(i + jn) \in [ki, k(i+1)-1]$, as required. This proves that $(\ref{APW-char1})\implies (\ref{APW-char2})$.

It remains to show that (\ref{APW-char2}) implies (\ref{APW-char1}). Assume that for all $0 \le j < k$ and $0 \le i < n$, we have $\pi_u^{-1}(i + jn) \in [ki, k(i+1) - 1]$. Fix any $i < n$. Then the $k$ values $\pi_u^{-1}(i + jn)$ for $j = 0, \dots, k-1$ all lie in $[ki, k(i+1) - 1]$. Since $\pi_u^{-1}$ is a permutation, these values must be distinct. Therefore, $\{\pi_u^{-1}(i), \pi_u^{-1}(n + i), \dots, \pi_u^{-1}((k - 1)n + i)\} = [ki, k(i+1) - 1]$.

Let us now examine the content of these positions in $u$. The value $u[\pi_u^{-1}(i + jn)]$ is the $j$-th letter in this collection. But $\pi^{-1}_u(i+jn)$ is the position of the $(i+1)$th occurrence of the letter $j$, hence $u[\pi_u^{-1}(i + jn)] = j$. It follows that $\{u[\pi_u^{-1}(i)], u[\pi_u^{-1}(n+i)], \dots, u[\pi_u^{-1}((k-1)n + i)]\} = \Sigma_k$, which means that the substring $u[ki\, ..\, k(i+1)-1]$ is a permutation of $\Sigma_k$. Since this holds for every $0 \le i < n$, the word $u$ is a concatenation of $n$ permutations of $\Sigma_k$, i.e., an alphabet-permutation power. This completes the proof.
\end{proof}

As a consequence, we have:

\begin{theorem}\label{thm:charGen}
 A necklace is the label of a Hamiltonian cycle of a generalized de Bruijn graph $\DB(k,kn)$ if and only if it is the inverse standard permutation, in cycle form, of the BWT of a generalized de Bruijn word of length $kn$ over $\Sigma_k$.
\end{theorem}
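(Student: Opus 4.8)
The plan is to show that Theorem~\ref{thm:charGen} follows almost immediately by combining Lemma~\ref{lem:APW-char} with the characterization of BWT images in terms of the standard permutation. First I would set up the correspondence carefully: a generalized de Bruijn word $[w]$ of length $kn$ over $\Sigma_k$ is, by definition, a necklace whose BWT image $u$ is an alphabet-permutation power; in particular, $u$ has balanced Parikh vector $(n,n,\ldots,n)$. The object we wish to identify with a Hamiltonian cycle is the inverse standard permutation $\pi_u^{-1}$ written in cycle form, viewed as a necklace (cyclic sequence) on the vertex set $\{0,1,\ldots,kn-1\}$.

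The forward direction proceeds as follows. Suppose $[w]$ is a generalized de Bruijn word, so its BWT $u$ is an alphabet-permutation power. By the equivalence $(\ref{APW-char1})\Leftrightarrow(\ref{APW-char3})$ of Lemma~\ref{lem:APW-char}, for every $0\le\ell<kn$ there is an edge from $\ell$ to $\pi_u^{-1}(\ell)$ in $\DB(k,kn)$. This means that the functional graph of $\pi_u^{-1}$ is a subgraph of $\DB(k,kn)$ consisting of edges present in the generalized de Bruijn graph. Now, because $[w]$ is an \emph{aperiodic} necklace, the standard permutation $\pi_u$ (and hence its inverse $\pi_u^{-1}$) is a single $kn$-cycle, as recalled in the preliminaries. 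A permutation that is a single full-length cycle, all of whose edges lie in $\DB(k,kn)$ and which visits every vertex exactly once, is precisely a Hamiltonian cycle of $\DB(k,kn)$. Thus $\pi_u^{-1}$ in cycle form labels a Hamiltonian cycle.

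For the converse, suppose a necklace $[u']$ on $\{0,\ldots,kn-1\}$ labels a Hamiltonian cycle $C$ of $\DB(k,kn)$. A Hamiltonian cycle determines a cyclic permutation $\rho$ of the vertices (each vertex mapped to its successor along $C$), and every edge $(\ell,\rho(\ell))$ belongs to $\DB(k,kn)$. I would then define $u$ to be the word whose inverse standard permutation in cycle form is exactly $\rho$; concretely, since the target is balanced, Remark~\ref{rmk:balancedLF} tells us to set the letter determined by position $\rho(\ell)$ via the floor map $\ell\mapsto\lfloor \ell/n\rfloor$, producing a word $u$ with Parikh vector $(n,\ldots,n)$ and $\pi_u^{-1}=\rho$. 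Because every edge of $C$ lies in $\DB(k,kn)$, condition $(\ref{APW-char3})$ holds, so by Lemma~\ref{lem:APW-char} the word $u$ is an alphabet-permutation power. Since $\rho$ is a single $kn$-cycle, $\pi_u$ is a $kn$-cycle, so $u$ is the BWT image of an aperiodic necklace $[w]$; that necklace is then by definition a generalized de Bruijn word, and $[u']=\rho$ is its inverse standard permutation in cycle form.

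The main obstacle I anticipate is bookkeeping rather than a deep difficulty: one must verify that ``a necklace labeling a Hamiltonian cycle'' and ``the inverse standard permutation in cycle form'' are genuinely the same combinatorial data, i.e., that the cyclic permutation read off a Hamiltonian cycle coincides with a valid $\pi_u^{-1}$ and conversely. This hinges on two facts that I would state explicitly: (i) the aperiodicity of $[w]$ is exactly what guarantees $\pi_u^{-1}$ is a single full cycle (a length-$kn$ cyclic permutation), matching the Hamiltonicity requirement; and (ii) Remark~\ref{rmk:balancedLF} provides a well-defined inverse construction recovering the necklace from $\rho$ in the balanced case, so the map $[w]\mapsto\pi_u^{-1}$ is a bijection between generalized de Bruijn words of length $kn$ and Hamiltonian cycles of $\DB(k,kn)$. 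With these two points pinned down, the theorem is a direct corollary of Lemma~\ref{lem:APW-char}, exactly as the phrase ``As a consequence'' suggests.
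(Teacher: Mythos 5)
Your strategy coincides with the paper's: reduce the statement to Lemma~\ref{lem:APW-char} plus the fact that a word is the BWT image of an aperiodic necklace if and only if its standard permutation is a full cycle. However, two steps are asserted rather than proved, and they are precisely the points the paper's proof takes care to justify. First, in the forward direction you write that $\pi_u$ is a single $kn$-cycle ``because $[w]$ is an \emph{aperiodic} necklace,'' but the definition of a generalized de Bruijn word does not include aperiodicity --- it only requires the BWT to be an alphabet-permutation power. Without aperiodicity, condition~(\ref{APW-char3}) of Lemma~\ref{lem:APW-char} only yields a vertex-disjoint union of cycles covering all of $\DB(k,kn)$, not a Hamiltonian cycle. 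You must show that a necklace whose BWT is an alphabet-permutation power is automatically aperiodic; the paper does this via Proposition~\ref{le:bwtpower}: the BWT of $[w^r]$ with $r>1$ repeats each letter $r$ times, forcing two adjacent positions inside a length-$k$ block to carry the same letter, which no alphabet-permutation power allows.

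Second, in the converse you define $u$ by assigning letter $\lfloor \ell/n\rfloor$ to position $\rho(\ell)$ and claim $\pi_u^{-1}=\rho$, citing Remark~\ref{rmk:balancedLF}. That remark recovers the \emph{necklace} from a permutation already known to be an inverse standard permutation; it does not certify that an arbitrary cyclic permutation $\rho$ arises as $\pi_u^{-1}$ for some word over $\Sigma_k$. For $\pi_u^{-1}=\rho$ to hold, $\rho$ must list the positions of each letter $j$ in increasing order, i.e., $\rho(jn)<\rho(jn+1)<\cdots<\rho(jn+n-1)$ for every $j$ --- equivalently, the would-be standard permutation has at most $k-1$ descents, which is the characterization from~\cite{DBLP:journals/tcs/CrochemoreDP05} that the paper invokes. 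This property does follow from Hamiltonicity, since every edge of the cycle lying in $\DB(k,kn)$ forces $\rho(i+jn)\in[ki,k(i+1)-1]$ and these intervals increase with $i$; but the verification must be carried out \emph{before} you can read condition~(\ref{APW-char3}) as a statement about $\pi_u^{-1}$, otherwise the appeal to Lemma~\ref{lem:APW-char} is circular. With these two points supplied, your argument becomes essentially the paper's proof.
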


\begin{proof}
The fact that every Hamiltonian cycle of $\DB(k,kn)$ corresponds to the inverse standard permutation of some length $n$ word over $k$ letters follows from condition (\ref{APW-char2}) of Lemma~\ref{lem:APW-char} and from the fact that a permutation $\pi$ is the standard permutation of a word over $k$ letters if and only if there are at most $k-1$ indices $i$ such that $\pi(i+1)<\pi(i)$~\cite[Theorem 1]{DBLP:journals/tcs/CrochemoreDP05}. 
We use the fact that generalized de Bruijn words are aperiodic necklaces: indeed, if $w = u^r$ is a power (with $r > 1$), then its BWT can be derived from the BWT of $u$ by repeating each letter $r$ times~(Proposition~\ref{le:bwtpower}). Consequently, the BWT of a periodic necklace cannot be an alphabet-permutation power and, conversely, a periodic necklace cannot have a BWT that is an alphabet-permutation power. We now conclude from Lemma~\ref{lem:APW-char}, and from the fact that a word is a BWT image of an aperiodic necklace if and only if its (inverse) standard permutation is a cycle.
 \end{proof}

Therefore, the generalized de Bruijn words of length $kn$ over the alphabet $\Sigma_k$ can be constructed from the  Hamiltonian cycles of the generalized de Bruijn graph $\DB(k,kn)$ by mapping $i\mapsto \lfloor i/n\rfloor$ for every $i=0,\ldots,  kn-1$.

\begin{example}
    The Hamiltonian cycles in $\DB(3,6)$ are: $(0,1,3,5,4,2)$, $(0,1,5,3,4,2)$, $(0,2,1,3,5,4)$, and $(0,2,1,5,3,4)$. Applying the mapping $i\mapsto \lfloor i/2\rfloor$, one obtains the ternary generalized de Bruijn words of length $6$:  $[001221]$, $[002121]$, $[010122]$, and $[010212]$.
\end{example}

Thanks to the characterization given in Theorem~\ref{thm:charGen}, we can obtain a formula for counting generalized de Bruijn words. We make use of the BEST theorem for directed graphs, together with the fact that the number of Hamiltonian cycles in $\DB(k,kn)$ is equal to the number of Eulerian cycles in $\DB(k,n)$.

\begin{theorem}[BEST theorem~\cite{vanAardenne-Ehrenfest1987}]
 The number of distinct Eulerian cycles in a Eulerian directed graph $G=(V,E)$ is given by
\begin{equation}\label{BEST}
  e(G)=\kappa(G)\prod_{j=1}^n(d_{v_j}-1)!
\end{equation}
 where $d_{v_j}$ is the outdegree of $v_j$. %
\end{theorem}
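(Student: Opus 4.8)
The plan is to establish the BEST formula by the classical \emph{last-exit} bijection of van Aardenne-Ehrenfest and de Bruijn, matching Eulerian circuits against pairs consisting of an oriented spanning tree together with local orderings of the outgoing edges. First I would fix an arbitrary root vertex $r$ and a distinguished edge $e^\ast$ leaving $r$. Since an Eulerian circuit traverses $e^\ast$ exactly once, each cyclic circuit admits a unique linearization $e^\ast=f_1,f_2,\ldots,f_{|E|}$ beginning with $e^\ast$; hence it suffices to count such linear edge-sequences. Because $G$ is Eulerian, the in-degree of every vertex equals its out-degree, so any maximal walk starting with $e^\ast$ can only halt back at $r$, and a complete linearization indeed returns to $r$ with its final edge.

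The heart of the argument is the following correspondence. Given such a linearization, for every vertex $v\neq r$ let $\tau(v)$ be the \emph{last} edge leaving $v$ in the sequence; I claim the edges $\{\tau(v):v\neq r\}$ form an oriented spanning tree rooted at $r$ in the sense of the preliminaries (root $r$ of outdegree $0$, every other vertex of outdegree $1$, no directed cycle). The outdegree conditions are immediate, and acyclicity is the key step: if these last-exit edges contained a directed cycle, I would select the edge $(u,w)$ of that cycle occurring latest in the sequence; after this final departure from $u$ the walk sits at $w$, yet $w$'s own last-exit edge has already been consumed, so the walk could never leave $w$ again, contradicting that it must still return to $r$ (and $r$ lies outside the cycle, having no last-exit edge). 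Conversely, from an oriented spanning tree $T$ and, for each vertex, a linear order on its outgoing edges in which the $T$-edge is placed last (and $e^\ast$ placed first at $r$), I would reconstruct a circuit greedily: start along $e^\ast$ and always leave each vertex by its next unused outgoing edge. Degree balance forces the walk to stop only at $r$, and the ``tree-edge-last'' rule is precisely what prevents it from stopping before every edge is used. Verifying that these two constructions are mutually inverse is the main obstacle, but it follows directly once acyclicity and this non-premature-halting property are in place.

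Finally I would count the linearizations through this bijection. The number of oriented spanning trees rooted at $r$ is $\kappa(G)$, which for an Eulerian graph is independent of the chosen root (a standard consequence of the Matrix--Tree theorem). For each vertex $v\neq r$ the tree edge is forced to be last, leaving $(d_v-1)!$ orderings of its remaining outgoing edges, while at $r$ the edge $e^\ast$ is forced to be first, again leaving $(d_r-1)!$ orderings. Multiplying these independent choices yields $\kappa(G)\prod_{v}(d_v-1)!$ linearizations beginning with $e^\ast$, which equals the number of Eulerian circuits, establishing \eqref{BEST}.
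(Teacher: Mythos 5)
The paper offers no proof of this statement: it is quoted verbatim as the classical BEST theorem with a citation to van Aardenne-Ehrenfest and de Bruijn, so there is no internal argument to compare against. Your proposal reconstructs precisely the canonical last-exit proof from that source, and it is essentially correct: the reduction from cyclic circuits to linearizations through a fixed edge $e^{\ast}$, the last-exit map onto oriented spanning trees rooted at $r$ (with the tree oriented toward the root, matching the paper's convention that the root has outdegree $0$ and every other vertex outdegree $1$), your acyclicity argument via the latest edge of a putative cycle, and the final count $\kappa(G)\prod_v (d_v-1)!$ with the tree edge forced last at $v\neq r$ and $e^{\ast}$ forced first at $r$ are all sound. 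Two points are worth tightening. First, the ``non-premature-halting'' claim in the converse direction is the one step you assert rather than prove; the standard completion is that if the greedy walk halts at $r$ with the set $U$ of vertices still possessing unused out-edges nonempty, then for each $v\in U$ the tree edge out of $v$ is unused (being scheduled last), so its head has an unused in-edge and, by the degree balance of the already-closed walk, an unused out-edge; hence $U$ is closed under passing to tree-parents and would contain $r$, contradicting that the walk halted there. Second, you correctly use that the number of oriented spanning trees rooted at $r$ is independent of $r$ for Eulerian graphs; note that the paper's definition of $\kappa(G)$ does not fix a root, so strictly speaking the formula requires reading $\kappa(G)$ as the count for one fixed root, exactly as you do and as the paper's own statement of the Matrix-Tree theorem (via a single reduced Laplacian) presupposes.
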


 By definition, the outdegree of every node in the generalized de Bruijn graph $\DB(k,n)$ is $k$, therefore 
$\prod_{j=1}^n(d_{v_j}-1)!=((k-1)!)^n$. So, we have the following result.

\begin{theorem}\label{thm:counting}
For every $n\geq 1$, the number of generalized de Bruijn words of length $kn$ over $\Sigma_k$ is 
\begin{equation}\label{eqDBW}
    \DBW_k(kn)=((k-1)!)^n \kappa(\DB(k,n)).
\end{equation}
\end{theorem}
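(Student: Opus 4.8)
The plan is to reduce the enumeration of generalized de Bruijn words to the enumeration of Eulerian cycles in the smaller graph $\DB(k,n)$, and then apply the BEST theorem. Concretely, I would show that the number of generalized de Bruijn words of length $kn$ equals the number of Hamiltonian cycles of $\DB(k,kn)$, which equals the number of Eulerian cycles of $\DB(k,n)$, which the BEST theorem evaluates to $\kappa(\DB(k,n))\,((k-1)!)^n$.

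First I would invoke Theorem~\ref{thm:charGen} to identify generalized de Bruijn words of length $kn$ with Hamiltonian cycles of $\DB(k,kn)$. The point requiring care is that this correspondence is a genuine bijection. A generalized de Bruijn word is an aperiodic necklace --- as already argued via Proposition~\ref{le:bwtpower}, the BWT of a periodic necklace cannot be an alphabet-permutation power --- so the standard permutation of its BWT, and hence its inverse, is a single $kn$-cycle, i.e.\ exactly one Hamiltonian cycle; conversely each Hamiltonian cycle, read in cycle form, is the inverse standard permutation of the BWT of a unique such necklace. Since a necklace and a cycle written in cycle notation are both rotation-invariant objects, the matching introduces no spurious factor.

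Next I would use the identity $\mathcal{L}\DB(k,n)=\DB(k,kn)$ together with the stated correspondence between Eulerian cycles of a graph and Hamiltonian cycles of its line graph, so that counting Hamiltonian cycles of $\DB(k,kn)$ is the same as counting Eulerian cycles of $\DB(k,n)$: a cyclic ordering of all edges of $\DB(k,n)$ is precisely a cyclic ordering of all vertices of $\DB(k,kn)$. Finally, applying the BEST theorem to $\DB(k,n)$ and using that every vertex has outdegree exactly $k$, the factor $\prod_{j=1}^n (d_{v_j}-1)!$ collapses to $((k-1)!)^n$, giving the number of Eulerian cycles as $\kappa(\DB(k,n))\,((k-1)!)^n$. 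Combining the three steps yields $\DBW_k(kn)=((k-1)!)^n\,\kappa(\DB(k,n))$, as claimed.

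The main obstacle is not any single computation but the consistent bookkeeping of counting conventions across the three steps: one must ensure that generalized de Bruijn words (as necklaces), Hamiltonian cycles, and Eulerian cycles are all counted as cyclic, rotation-invariant objects, compatibly with the version of the BEST theorem used here, so that no rotational over- or under-counting arises. In particular, it is worth confirming that the complexity $\kappa(\DB(k,n))$ entering the BEST formula is the quantity that matches the cyclic count of Eulerian cycles; this is precisely where the Eulerian property of $\DB(k,n)$ (making the oriented-spanning-tree count independent of the chosen root) does the essential work.
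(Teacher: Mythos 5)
Your proposal is correct and follows essentially the same route as the paper: identify generalized de Bruijn words with Hamiltonian cycles of $\DB(k,kn)$ via Theorem~\ref{thm:charGen}, pass to Eulerian cycles of $\DB(k,n)$ through the line-graph identity, and apply the BEST theorem with all outdegrees equal to $k$. Your extra attention to the bijection being rotation-consistent is a reasonable elaboration of what the paper leaves implicit, but it does not change the argument.
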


\begin{table}[ht]
\begin{center}
\begin{tabular}{cccccccccccccccccccc}
$n$    & 2 & 3 & 4 & 5 & 6  & 7  & 8  & 9  & 10 & 11   \\ \hline
$\DBW_2(2n)$  & 1& 1& 2& 3& 4& 7& 16& 21& 48& 93\\ \hline
$\DBW_3(3n)$  & 4& 24& 64& 512& 1728& 13312& 32768& 373248& 1310720& 10903552
\end{tabular}
\end{center}
\caption{The first few values of $\DBW_2(2n)$  and $\DBW_3(3n)$ (resp.~sequence A027362 and A192513 in~\cite{oeis}). \label{tab:DBW}}
\end{table}

The number of spanning trees can be computed by means of a determinant, as an application of the matrix-tree theorem below. %
Recall that the \emph{Laplacian matrix} of a directed graph $G$ is the $n\times n$ matrix $L_G=D_G-A_G$, where $D_G$ and $A_G$ are the degree matrix and the adjacency matrix of $G$, respectively. It can be defined by
$$(L_G)_{ij}=\begin{cases}
                    d_{v_i} - d_{v_iv_i}  & \mbox{ if }   i = j \\
                    - d_{v_iv_j} & \mbox{ if }   i \neq j
                 \end{cases}$$%
where $d_{v_iv_j}$ is the number of edges directed from $v_i$ to $v_j$, and $d_{v_i} = \sum_j d_{v_iv_j}$ is the outdegree of $v_i$, so that  $d_{v_i} - d_{v_iv_i}$ is the outdegree of $v_i$ minus the number of its self loops.

\begin{theorem}[Matrix-Tree Theorem  for Eulerian graphs~\cite{1086426}]\label{K}
 Let $G$ be an Eulerian graph. The number $\kappa(G)$ is equal to the determinant of the matrix obtained from the Laplacian matrix of $G$ after removing the last row and the last column. 
\end{theorem}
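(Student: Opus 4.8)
The plan is to deduce the statement from the classical Matrix--Tree identity for directed graphs, isolating the role of the Eulerian hypothesis at the very end. Concretely, I would first prove, for an \emph{arbitrary} directed graph $G$ on vertices $v_1,\dots,v_n$, that the determinant of the matrix $L_G^{(i)}$ obtained from $L_G$ by deleting row $i$ and column $i$ equals the number $\tau_i(G)$ of oriented spanning trees rooted at $v_i$ (those in which $v_i$ has outdegree $0$ and every other vertex has outdegree $1$). The Eulerian assumption would then enter only to show that $\tau_i(G)$ does not depend on the choice of $i$, so that deleting the \emph{last} row and column produces the root-independent quantity $\kappa(G)$.

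For the first part I would factor the Laplacian. Form the $n\times|E|$ matrices $P$ and $Q$, where the column indexed by an edge $e$ has a single entry $+1$ in row $s(e)$ in $P$, and entries $+1$ in row $s(e)$ and $-1$ in row $t(e)$ in $Q$ (self-loops giving zero columns). A direct check against the definition of $L_G$ shows $L_G = P Q^{\top}$, and hence $L_G^{(i)} = \tilde P \tilde Q^{\top}$, where the tilde denotes deletion of row $i$. Applying the Cauchy--Binet formula expands $\det L_G^{(i)}$ as a sum over the size-$(n-1)$ edge subsets $S$ of the product of the corresponding minors of $\tilde P$ and $\tilde Q$. The minor of $\tilde P$ on $S$ is nonzero precisely when $S$ assigns to each vertex other than $v_i$ exactly one outgoing edge (and no edge of $S$ has source $v_i$), in which case it is $\pm1$; the matching minor of $\tilde Q$ is nonzero precisely when, in addition, $S$ is acyclic, i.e.\ an oriented spanning tree rooted at $v_i$. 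Ordering the non-root vertices in a topological order toward the root and indexing each selected edge by its unique source makes both reduced matrices triangular with $\pm1$ diagonal, and comparing diagonals shows the two minors always carry the same sign. Thus each such tree contributes exactly $+1$, giving $\det L_G^{(i)} = \tau_i(G)$.

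It remains to remove the dependence on the root, and this is where the Eulerian hypothesis is used. Since $\mathrm{indeg}(v)=\mathrm{outdeg}(v)$ for every vertex, in $L_G$ not only does every row sum to $0$ (outdegree minus outgoing edges) but every column sums to $0$ as well (outdegree minus indegree). A matrix all of whose rows and columns sum to zero has all of its first cofactors equal: from $L_G\mathbf{1}=0$ and the identity $L_G\,\mathrm{adj}(L_G)=\det(L_G)I=0$, each column of the adjugate lies in $\ker L_G=\mathrm{span}(\mathbf{1})$ (assuming $G$ connected, so the rank is $n-1$; otherwise both sides vanish), forcing the cofactors to be constant along each row, and symmetrically $\mathbf{1}^{\top}L_G=0$ forces them constant along each column. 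Hence all first cofactors coincide; in particular the principal cofactors, which are exactly the determinants $\det L_G^{(i)}=\tau_i(G)$, are all equal to one common value, namely $\kappa(G)$. Taking $i=n$ yields the theorem.

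I expect the main obstacle to be the sign bookkeeping in the Cauchy--Binet step: one must check that for every spanning tree $S$ rooted at $v_i$ the minor of $\tilde P$ and the matching minor of $\tilde Q$ share the same sign, so that the cancellation-free count $\tau_i(G)$ emerges rather than a signed sum. The topological-ordering argument above resolves this, and the remaining ingredients (the factorization $L_G=PQ^{\top}$ and the cofactor-equality lemma) are routine linear algebra.
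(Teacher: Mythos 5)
The paper offers no proof of this statement---it is imported verbatim from the cited reference---so there is no internal argument to compare yours against; judged on its own terms, your proof is correct and follows the standard route: Tutte's directed matrix--tree theorem via the factorization $L_G=PQ^{\top}$ and Cauchy--Binet (each surviving term corresponds to an oriented spanning tree rooted at $v_i$ and contributes $+1$ after the sign check, which does go through as you describe), followed by the observation that for a balanced digraph both the row sums and the column sums of $L_G$ vanish, whence all first cofactors coincide and the principal cofactor $\det L_G^{(n)}$ computes the root-independent count. The factorization itself is right, including the treatment of self-loops (a $1$ in $P$, a zero column in $Q$, reproducing the diagonal entries $d_{v_i}-d_{v_iv_i}$). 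The one step you should not dismiss as routine is the claim that the kernel and left kernel of $L_G$ are one-dimensional for connected $G$: this is exactly what upgrades ``each column of $\mathrm{adj}(L_G)$ lies in $\ker L_G$'' to ``each column is a constant vector.'' It follows without circularity from what you have already established---a strongly connected digraph admits an oriented spanning tree rooted at any vertex (e.g.\ a shortest-path in-tree), so some principal $(n-1)\times(n-1)$ minor is nonzero, the rank is at least $n-1$, and $\det L_G=0$ makes it exactly $n-1$---but a sentence to that effect is needed. Two further cosmetic points: the disconnected case is not really ``both sides vanish'' so much as excluded by the Eulerian hypothesis (an Eulerian cycle forces connectivity on the edge-supported part); and your reading of $\kappa(G)$ as the common value of the rooted counts $\tau_i(G)$ is the one the paper actually needs for the BEST theorem, even though its definition of $\kappa(G)$ as ``the number of oriented spanning trees'' does not explicitly fix a root.
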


The matrix $L_G^0$ obtained by removing the last row and column from $L_G$ is sometimes called the \textit{reduced Laplacian matrix} of $G$. 

We now discuss the relations between generalized de Bruijn words and \emph{sandpile groups}, algebraic objects capturing important properties of directed Eulerian graphs, which can be defined in terms of the reduced Laplacian matrix of the graph. We start by giving some standard results and definitions from~\cite{DBLP:journals/jct/Stanley16}.
Recall that any integer matrix $A$ can be written in a canonical form $A=PDQ$, where both matrices $P$, $Q$ are integer matrices and $D$ is an integer diagonal matrix 
with the property that if $d_1,\ldots, d_n$ are the nonzero entries of the main diagonal of $D$, then $d_i | d_{i+1}$ for every $i$. The matrix $D$ is called the \emph{Smith Normal Form} of $A$. The $d_i$ are called the \emph{invariant factors} of the matrix $A$.

Let $G=(V, E)$ be a finite strongly connected directed graph, that is, for any $v, w \in V$ there are directed paths in $G$ from $v$ to $w$ and from $w$ to $v$. Then associated to any vertex $v$ of $G$ is an Abelian group $K(G, v)$, the \textit{sandpile group} (also known as critical
group, Picard group, Jacobian, and group of components, see~\cite{DBLP:journals/jct/Stanley16} for the detailed definition). %
When $G$ is Eulerian, the groups $K(G, v)$ and $K(G, u)$ are isomorphic for any $v,u\in V$, and we let $K(G)$ denote the \emph{sandpile group of $G$}, and one has:
\begin{equation}\label{isomorphism}
    K(G)\cong \bigoplus_{i=1}^{n-1} \Z_{d_i}
\end{equation}
where $d_1,\ldots, d_{n-1}$ are the invariant factors of $L_G^0$, or equivalently, $d_1,\ldots, d_{n-1},0$ are  the invariant factors of $L_G$.

By the matrix-tree theorem, the order of the sandpile group %
$|K(G)|=\kappa(G)$ %
is equal to  $\det(L_G^0)$, the determinant of the reduced Laplacian matrix of $G$.

\begin{example}
Consider the generalized de Bruijn graph $\DB(3,6)$ (Fig.~\ref{fig:DB36}). %
Its Laplacian matrix is 
\[L_{\DB(3,6)}=\begin{pmatrix}
2 & -1 & -1 & 0 & 0 & 0 \\
0 & 3 & 0 & -1 & -1 & -1 \\
-1 & -1 & 2 & 0 & 0 & 0 \\
0 & 0 & 0 & 2 & -1 & -1 \\
-1 & -1 & -1 & 0 & 3 & 0 \\
0 & 0 & 0 & -1 & -1 & 2
\end{pmatrix}\]

and its Smith Normal Form is
    \[\SNF(L_{\DB(3,6)})=\begin{pmatrix}
 1& 0& 0& 0& 0& 0 \\
 0& 1& 0& 0& 0& 0 \\
 0& 0& 3& 0& 0& 0 \\
 0& 0& 0& 3& 0& 0 \\
 0& 0& 0& 0& 3& 0 \\
 0& 0& 0& 0& 0& 0
\end{pmatrix}\]

  Hence, by \eqref{isomorphism}, we have $K(\DB(3,6))\cong \Z_3^3$. The matrix $\SNF(L_{\DB(3,6)})$ has determinant $3^3=27$, so $\kappa(\DB(3,6))=27$. %
  Thus, by Theorem~\ref{thm:counting}, there are $27 \cdot 2^6 = 1728$ ternary generalized de Bruijn words of length $18$—the same as the number of distinct Eulerian cycles in $\DB(3,6)$ and Hamiltonian cycles in $\DB(3,18)$.

As another example, the Laplacian matrix of $\DB(2,6)$ is 
\[L_{\DB(2,6)}=
\begin{pmatrix}
1 & -1 & 0 & 0 & 0 & 0 \\
0 & 2 & -1 & -1 & 0 & 0 \\
0 & 0 & 2 & 0 & -1 & -1 \\
-1 & -1 & 0 & 2 & 0 & 0 \\
0 & 0 & -1 & -1 & 2 & 0 \\
0 & 0 & 0 & 0 & -1 & 1
\end{pmatrix}\]
whose Smith Normal Form is 
    \[\SNF(L_{\DB(2,6)})=\begin{pmatrix}
 1& 0& 0& 0& 0& 0 \\
 0& 1& 0& 0& 0& 0 \\
 0& 0& 1& 0& 0& 0 \\
 0& 0& 0& 2& 0& 0 \\
 0& 0& 0& 0& 2& 0 \\
 0& 0& 0& 0& 0& 0
\end{pmatrix}\]
We have $K(\DB(2,6))\cong \Z_2^2$, and in fact there are $4\cdot  (1!)^6=4$ binary generalized de Bruijn words of length $12$, namely $[000010111101]$, $[000011101101]$, $[000100101111]$, and $[000100111011]$.
 \end{example}
 
For ordinary de Bruijn graphs over a binary alphabet, the general structure of the sandpile group was %
described by Levine~\cite{DBLP:journals/jct/Levine11}:
 \begin{equation}\label{Levine}
     K(\DB(2,2^d))\cong \bigoplus_{i=1}^{d-1}(\Z_{2^i})^{2^{d-1-i}}.
 \end{equation}

In the case of generalized de Bruijn graphs, Chan, Hollmann and Pasechnik gave the structure of $K(\DB(k, n))$ for arbitrary $k$ and $n$~\cite[Theorem 3.1]{ECCGTA13}. We refer the reader to~\cite{ECCGTA13,CHAN2015268} for further details.

 In the next sections, we %
 focus on the case where the alphabet size is a prime number $p$, allowing us to connect generalized de Bruijn words to necklaces having a BWT matrix that is invertible over $\Z_p$.

\section{Invertible necklaces and Reutenauer groups}\label{sec:invertible}

In this section, we define invertible necklaces and highlight their connections with abstract algebra.

In \cite{DBLP:journals/corr/abs-2409-07974} (see also the related paper \cite{DBLP:journals/corr/abs-2409-09824}), the authors used BWT matrix multiplication to obtain a group structure on important classes of binary words (namely, \emph{Christoffel words} and \emph{balanced} words) that have an invertible BWT matrix. Following this idea, we define \emph{invertible} necklaces: %

\begin{definition}
    Let $p$ be a prime. A necklace $[w]$ over the alphabet $\Sigma_p=\{0,1,\ldots,p-1\}$ is called \emph{invertible} if its BWT matrix is non-singular, i.e., has nonzero determinant modulo $p$.
\end{definition}

As observed in \cite{DBLP:journals/corr/abs-2409-07974}, the product of two BWT matrices is not, in general, a BWT matrix; but the author suggests replacing BWT matrices by \emph{circulant matrices}. As we will see, using circulant matrices, we can define the product between any two invertible necklaces.

Let $w$ be a word over $\Sigma_k$. The \textit{circulant matrix} of $w$ is the square matrix $\CM_w$ whose $(i+1)$th row is $\sigma^{i}(w)$. For example, the circulant matrix of $011$ is $\CM_{011}=\begin{pmatrix}
0\, 1\, 1 \\
1\, 0\, 1 \\
1\, 1\, 0
\end{pmatrix}$.

One can immediately observe that, since the circulant matrix of a word can be obtained by permuting the rows of its BWT matrix, a necklace is invertible if and only if any (and then, every) of its associated circulant matrices is invertible.

Recall that for every prime $p$ and any positive integer $n$, there is a unique finite field with $p^n$ elements, denoted $\GF_{p^n}$, and its multiplicative group $\GF_{p^n}^*$ is cyclic. The set of invertible $n \times n$-circulant matrices over $\GF_p$ forms, with respect to matrix multiplication, an Abelian group $C(p, n)$. We consider the group $C(p,n)/\langle Q_n \rangle$, where $Q_n$ is the permutation matrix of the cycle  $(0\, 1\, \ldots \, n-1)$. This group is called the \emph{Reutenauer group} $\RG_p^n$ in \cite{DuzhinJMS}. Intuitively, an element of the Reutenauer group corresponds to an equivalence class of invertible circulant matrices, where two circulant matrices are equivalent if and only if they are the circulant matrices of two conjugated words. Thus, $\RG_p^n$ is in natural bijection with the set of invertible necklaces of length $n$ over $\Sigma_p$, and one can write $\CM_{[w]}=\{\CM_v,~v\in[w]\}\in \RG_p^n$ for the class of circulant matrices associated with a given invertible necklace $[w]$ of length $n$ over $\Sigma_p$. In particular, this induces a group structure on the set of such invertible necklaces, isomorphic to $\RG_p^n$, namely with respect to the multiplication defined by $[u]\cdot [v]=[w]$ where $[w]$ is such that $\CM_{[u]}\cdot \CM_{[v]}=\CM_{[w]}$. This multiplication does not depend on the choice of representatives for each necklace (equivalently, for each matrix). The \emph{trace} of a matrix is defined as the sum of its diagonal coefficients. For a word $w$, one can observe that $\CM_{w}$ has trace $wt(w)$. Since $wt(w)$ is invariant under rotation, one can define the trace of a class of circulant matrices $\CM_{[w]}$ as $\Tr\CM_{[w]}=wt([w])\mod p$.

As shown in~\cite{DuzhinJMS}, the Reutenauer group acts on the set of all aperiodic necklaces %
as follows:
Let $[v]$ be an aperiodic necklace of length $n$ over $\Sigma_p$.  For any $\CM_{[w]}\in \RG_p^n$, we define $\CM_{[w]}\cdot [v]$ as %
the necklace of $\CM_{w}\cdot v^T$. %
In particular, this operation does not depend on the choice of a representative.
For example, $\CM_{[11010]}\cdot [11000]=[11110]$ since $\begin{pmatrix}
1\, 1\, 0\, 1\, 0 \\
0\, 1\, 1\, 0\, 1 \\
1\, 0\, 1\, 1\, 0\\
0\, 1\, 0\, 1\, 1\\
1\, 0\, 1\, 0\, 1
\end{pmatrix}\cdot
\begin{pmatrix}
1 \\
1 \\
0\\
0\\
0
\end{pmatrix}= 01111
$.

The orbit of an aperiodic necklace $[v]$ is therefore $O_{[v]}=\{[\CM_{[w]}\cdot [v]] \mid \CM_{[w]}\in \RG_p^n\}$. 

We now recall some classical results and definitions in abstract algebra (see, for example,~\cite{Lidl_Niederreiter_1996} for a more detailed presentation), which have a strong connection with circulant matrices and invertible necklaces.

It is well known that the number of irreducible polynomials over $\GF_p$ of degree $n$ is equal to the number of aperiodic necklaces of length $n$ over $\Sigma_p$, but there is no known canonical bijection between the two sets. 
The \emph{minimal polynomial} of a nonzero element $\alpha$ of $\GF_{p^n}$ is the (unique) monic polynomial $f\in\GF_p[X]$ with the least degree for which $\alpha$ is a root. This polynomial has as roots all the $m$ distinct elements of the form $\alpha, \alpha^{p},\ldots, \alpha^{p^{m-1}}$, where $m$ is the least integer such that $\alpha^{p^{m}}=\alpha$, i.e., the distinct roots of $f$ are the orbit of $\alpha$ under the application of the Frobenius automorphism $\alpha\mapsto \alpha^p$. Thus, $f$ has degree $m$ and can be factored as
\[f=(x-\alpha)(x-\alpha^{p})\cdots (x-\alpha^{p^{m-1}}).\]
The sum of all roots of $f$ is called the \emph{trace} of $f$. %

But $\GF_{p^n}$ is also a vector space over $\GF_{p}$. An element $\gamma$ of $\GF_{p^n}$ is called \emph{normal} if  $\{\gamma, \gamma^{p},\ldots,\gamma^{p^{n-1}}\}$ forms a vector space basis for $\GF_{p^n}$ over $\GF_{p}$, called a \emph{normal basis}. 

The minimal polynomial of a normal element of $\GF_{p^n}$ over $\GF_{p}$ is called a \emph{normal polynomial}. %
Normal polynomials have non-zero trace (modulo $p$).

If $\gamma$ is a normal element of $\GF_{p^n}$, every element of $\GF_{p^n}$ can be written as a linear combination of elements in the normal basis $\{\gamma, \gamma^{p},\ldots,\gamma^{p^{n-1}}\}$. That is, we can write all the elements of $\GF_{p^n}$ as $n$-length words, or  $n$-length  vectors. %
More precisely, if $\alpha=w_0\gamma+w_1\gamma^p+\ldots +w_{n-1}\gamma^{p^{n-1}}$, $w_j\in\GF_p$ for every $j=0,\ldots,n-1$, we can represent $\alpha$ with the word $w=w_0w_1\cdots w_{n-1}$. 
The application of the Frobenius automorphism to an element of the field corresponds to the shift of the corresponding word, and an orbit to a conjugacy class, i.e., to a necklace.  This necklace is aperiodic if and only if the orbit is maximal, i.e., has size $n$.

\begin{table}[ht]
  \begin{center}
    \caption{The finite field $\GF_{2^4}$ as a vector space over $\GF_2$, where the elements are grouped by orbits with respect to a normal element $\gamma$.}
    \label{tableGFV}
    \begin{tabular}{l|l|l|l} 
     orbit & orbit as vectors (words)  & normal \\
      \hline
    $\{0\}$ & $\{0000\}$ & no \\
    $\{\gamma,\gamma^2,\gamma^4,\gamma^8\}$ & $\{1000,0100,0010,0001\}$  & yes\\
    $\{\gamma+\gamma^2,\gamma^2+\gamma^4,\gamma^4+\gamma^8,\gamma+\gamma^8\}$ & $\{1100,0110,0011,1001\}$  & no\\
    $\{\gamma^2+\gamma^8,\gamma+\gamma^4\}$ & $\{0101,1010\}$  & no\\
    $\{\gamma+\gamma^2+\gamma^4,\gamma^2+\gamma^4+\gamma^8,\gamma+\gamma^4+\gamma^8,\gamma+\gamma^2+\gamma^8\}$ & $\{1110,0111,1011,1101\}$ &  yes\\
    $\{\gamma+\gamma^2+\gamma^4+\gamma^8\}$  & $\{1111\}$ & no 
    \end{tabular}
  \end{center}
\end{table}

Fixing a normal element $\gamma$ and writing elements of $\GF_{p^n}$ in the induced normal basis, the orbit of $\gamma$ corresponds to the necklace $[10^{n-1}]$. There may be other orbits constituted by normal elements. For example, the orbit corresponding to the necklace $[1110]$ is another orbit of normal elements in $\GF_{2^4}$.

The number of normal elements of $\GF_{p^n}$ is %
counted by $\Phi_p(n)$, the generalized Euler's totient function. Equivalently, $\Phi_p(n)$ %
counts the number of polynomials over $\GF_p$ of degree smaller than $n$ and coprime with $X^n-1$. It can be computed using the formula
\[\Phi_p(n)=p^n\prod_{d | (n/\lambda_p(n))}\left(1-\dfrac{1}{p^{\ord_p(d)}}\right)^{\tfrac{\phi(d)}{\ord_p(d)}}\]
for $n>1$, 
where $\lambda_p(n)$ is the largest power of $p$ dividing $n$, and $\ord_p(d)$ is the multiplicative order of $d$ modulo $p$. The first mention of this formula seems to come from~\cite{Hensel1888230237}, and a more recent study can be found in~\cite{hyde2018normalelementsfinitefields}.%

The first few values of the sequences $\Phi_2(n)$ and $\Phi_3(n)$ are presented in Table~\ref{tab:Phi}.

\begin{table}[ht]
\begin{center}
\begin{tabular}{cccccccccccccccccccc}
$n$    & 1 & 2 & 3 & 4 & 5 & 6  & 7  & 8  & 9  & 10 & 11  & 12  \\ \hline
$\Phi_2(n)$  & 1& 2& 3& 8& 15& 24& 49& 128& 189& 480& 1023& 1536\\ \hline
$\Phi_3(n)$  & 1& 4& 18& 32& 160& 324& 1456& 2048& 13122& 25600& 117128 &209952
\end{tabular}
\end{center}
\caption{First few values of $\Phi_2(n)$  and $\Phi_3(n)$ (resp.~sequence A003473 and A003474 in~\cite{oeis}).\label{tab:Phi}}
\end{table}

Let $p$ be a prime, and consider words in $\Sigma_p^n$ as vectors in the vector space $\GF_{p^n}$. %

An $n \times n$-circulant matrix over $\GF_p$ is in $C(p,n)$ if and only if it is invertible (or non-singular), if and only if its determinant is non-zero modulo $p$; or, equivalently, if 
its rows form a normal basis of $\GF_{p^n}$.

Since every element of a normal basis can be chosen as the first row of a corresponding invertible circulant matrix, the order of $C(p,n)$ is $\Phi_p(n)$, and the order of $\RG_p^n$ (hence, the number of invertible necklaces) is $\Phi_p(n)/n$.

We therefore arrive at the following characterization:

\begin{theorem}\label{thm:invneckeq}
     Let $[w]$ be a necklace  over $\Sigma_p$, $p$ prime. The following are equivalent:
    \begin{enumerate}
        \item The necklace $[w]$ is invertible;
        \item Any word in $[w]$ has an invertible circulant matrix over $\GF_p$;
        \item $\CM_{[w]}$ belongs to the Reutenauer group $\RG_p^n$;
        \item Any word in $[w]$ is a vector corresponding to a normal element of $\GF_{p^n}$;
        \item Every word of length $n$  over $\Sigma_p$ can be written in a unique way as a sum (modulo $p$) of words in $[w]$.
    \end{enumerate}
\end{theorem}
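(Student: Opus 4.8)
The plan is to funnel all five statements through a single central property, namely that the circulant matrix $\CM_v$ of some (equivalently, every) word $v\in[w]$ is non-singular over $\GF_p$. I would open by justifying the ``equivalently every'' clause. If $v'=\sigma(v)$, a direct computation on the entries $(\CM_v)_{ij}=v_{(j-i)\bmod n}$ shows that $\CM_{v'}$ is obtained from $\CM_v$ by left multiplication by a power of $Q_n$; since circulant matrices commute with the powers of $Q_n$, the matrices $\CM_v$ for the various conjugates of $w$ all lie in the same coset of $\langle Q_n\rangle$ and share the same determinant, hence are invertible together. This also settles $(1)\iff(2)$: as already observed right after the definition of the circulant matrix, $\CM_v$ is a row permutation of the BWT matrix of $[w]$, so the two share the same determinant up to sign and are non-singular simultaneously.

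The equivalence $(2)\iff(3)$ is then a definition chase. By construction $\CM_{[w]}$ is the coset of $\CM_v$ in $C(p,n)/\langle Q_n\rangle=\RG_p^n$, an object that is defined precisely when $\CM_v\in C(p,n)$, i.e.\ when $\CM_v$ is an invertible circulant matrix; the previous paragraph guarantees that this membership does not depend on the chosen representative $v$.

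For $(2)\iff(4)$ and $(2)\iff(5)$ I would pass to the field identification set up earlier in the section. Fixing a normal element $\gamma$, a word $v=v_0\cdots v_{n-1}$ encodes $\alpha=\sum_{j}v_j\gamma^{p^j}$, and since applying the Frobenius automorphism corresponds to the shift, the rows $\sigma^0(v),\ldots,\sigma^{n-1}(v)$ of $\CM_v$ encode exactly the Frobenius orbit $\alpha,\alpha^{p},\ldots,\alpha^{p^{n-1}}$. By the stated characterization of $C(p,n)$, the matrix $\CM_v$ is invertible if and only if these rows form a normal basis of $\GF_{p^n}$, i.e.\ if and only if $\alpha$ is a normal element; as normality is a property of the whole orbit, it is independent of the representative, matching the ``any word in $[w]$'' phrasing of $(4)$. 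For $(5)$ I would read ``written in a unique way as a sum modulo $p$ of words in $[w]$'' as: every vector of $\GF_p^n$ is a unique $\GF_p$-linear combination of the $n$ shifts of $w$, the coefficients in $\{0,\ldots,p-1\}$ recording multiplicities modulo $p$ (this is the only reading under which all $p^n$ words can be uniquely represented). This is exactly the statement that those shifts form a basis of $\GF_p^n$, equivalently that $\CM_v$ has full rank, yielding $(2)\iff(5)$.

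The main obstacle is not any isolated deep step---each link rests on a fact already assembled in this section---but rather the bookkeeping that keeps the ``some versus every representative'' quantifier consistent while translating between the linear-algebraic picture (rows of $\CM_v$ form a basis of $\GF_p^n$) and the field-theoretic picture (the Frobenius orbit is a normal basis of $\GF_{p^n}$). I would also record explicitly that invertibility forces $[w]$ to be aperiodic: if $w=u^r$ with $r>1$, the shifts of $w$ are not all distinct, so $\CM_v$ is singular and conditions $(1)$--$(5)$ fail simultaneously, leaving the stated equivalence vacuously consistent on periodic necklaces.
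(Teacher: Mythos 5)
Your proposal is correct and follows essentially the same route as the paper, which derives this theorem as a summary of the facts assembled just before it: the row-permutation relation between circulant and BWT matrices, the definition of $\RG_p^n$ as $C(p,n)/\langle Q_n\rangle$, the shift--Frobenius correspondence under a fixed normal element, and the identification of invertibility of a circulant matrix with its rows forming a (normal) basis of $\GF_{p^n}$ over $\GF_p$. Your explicit treatment of the representative-independence and of the periodic case only makes explicit what the paper leaves implicit.
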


\begin{example}
Let $p=2$ and $n=4$. We have three aperiodic binary necklaces, namely $[1000]$, $[1100]$, and $[1110]$. Out of them, the first and the last are invertible, while the second is not, since $\det \begin{pmatrix}
1\, 1\, 0\, 0\\
0\, 1\, 1\, 0 \\
0\, 0\, 1\, 1\\
1\, 0\, 0\, 1
\end{pmatrix}=0$. Indeed, $1100+0110+0011=1001$ in $\GF_{2^4}$, so the elements are not linearly independent as vectors, and therefore do not form a basis of $\GF_{2^4}$.

The Reutenauer group $\RG_2^4$ is therefore constituted by the classes of matrices $Id=\CM_{[1000]}$
and
$\CM_{[1110]}$.
\end{example}

So, the set of aperiodic necklaces of length $n$ over $\Sigma_p$ with invertible BWT matrix forms an abelian group isomorphic to the Reutenauer group $\RG_p^n$, but the multiplication has to be carried out with the circulant matrices rather than with BWT matrices. This was also speculated in a remark we recently found at the end of \cite{DBLP:journals/corr/abs-2409-07974} (see also the related paper \cite{DBLP:journals/corr/abs-2409-09824}).

\begin{lemma}\label{lem:trace}
Let $[w]$ be a necklace over $\Sigma_p$ and let $f$ be the minimal polynomial of $w$, seen as a vector. Then, $f$ has trace $(m,\ldots,m)$, where $m=wt([w])\mod p$. If $[w]$ is invertible, then $m = \Tr\CM_{[w]} \neq 0$.
\end{lemma}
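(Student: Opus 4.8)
The plan is to argue entirely through the normal-basis representation of $\GF_{p^n}$ together with the dictionary, established just before the lemma, under which the Frobenius automorphism $x\mapsto x^p$ acts on words as the shift $\sigma$. Fix the normal element $\gamma$ and identify $w=w_0\cdots w_{n-1}$ with the field element $\alpha=\sum_{j=0}^{n-1}w_j\gamma^{p^j}$. Then $\alpha^{p^i}$ is represented by the shifted word $\sigma^i(w)$, and the roots of the minimal polynomial $f$ of $\alpha$ are exactly the distinct Frobenius conjugates $\alpha,\alpha^{p},\dots,\alpha^{p^{r-1}}$, where $r=\deg f$ is the least integer with $\alpha^{p^r}=\alpha$; under the dictionary these are the $r$ distinct shifts of $w$.

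First I would compute the trace of $f$, which by definition is the sum of its roots $\sum_{i=0}^{r-1}\alpha^{p^i}$. Read coordinatewise in the normal basis, this is the vector $\sum_{i=0}^{r-1}\sigma^i(w)$, whose every coordinate is the sum of the letters of $w$ taken over one period. When $[w]$ is aperiodic we have $r=n$, so each coordinate equals $\sum_k w_k=wt(w)$, and the trace is the constant vector $(m,\dots,m)$ with $m=wt([w])\bmod p$, as claimed.

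For the second assertion I would invoke Theorem~\ref{thm:invneckeq}: if $[w]$ is invertible then every word in $[w]$ is a normal element, whose Frobenius orbit has full size $n$; hence $[w]$ is aperiodic and the computation above applies, giving trace $(m,\dots,m)$. The equality $m=\Tr\CM_{[w]}$ is then immediate from the definition $\Tr\CM_{[w]}=wt([w])\bmod p$. Finally, $f$ is a normal polynomial, so it has nonzero trace modulo $p$; since a field element vanishes exactly when its coordinate vector does, the trace $(m,\dots,m)=m\cdot(1,\dots,1)$ being nonzero forces $m\neq0$ (equivalently, $(1,\dots,1)$ represents $\Tr_{\GF_{p^n}/\GF_p}(\gamma)\neq0$, which cannot be recovered by scaling with $m=0$).

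The coordinatewise and degree/period bookkeeping are routine; the one step deserving care is the passage from ``normal polynomials have nonzero trace'' to ``$m\neq0$'', which I would phrase through the factorization $(m,\dots,m)=m\cdot(1,\dots,1)$ so as to avoid recomputing the field trace of $\alpha$ by hand. The subtlety worth flagging is the non-aperiodic case: if $w=v^{c}$ with $v$ primitive, the same sum yields the weight of $v$ in each coordinate rather than $wt([w])$, so for periodic necklaces the constant $m$ must be read as the primitive weight. Since every invertible necklace is aperiodic, this does not affect the load-bearing second half of the statement.
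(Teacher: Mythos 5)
Your proof is correct and follows essentially the same route as the paper's: the trace of $f$ is computed as the sum of the Frobenius conjugates of $\alpha$, read coordinatewise as the sum of the shifts of $w$, and the second assertion is deduced from normality. If anything, your write-up is more careful than the paper's one-line justification of $m\neq 0$ (you derive it from the nonvanishing of the trace of a normal polynomial via $(m,\ldots,m)=m\cdot(1,\ldots,1)$ and $\mathrm{Tr}(\gamma)\neq 0$, where the paper merely asserts that invertibility of $\CM_{[w]}$ forces nonzero trace), and your caveat that for a periodic necklace $w=v^c$ the constant coordinate of the trace is $wt(v)$ rather than $wt([w])$ is a legitimate imprecision in the statement that the paper's proof glosses over but which does not affect the invertible (hence aperiodic) case.
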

\begin{proof}
Since the roots of $f$ are, as vectors, shifts of $w$, the trace of $f$ is the vector $(m,\ldots, m)$. Furthermore, if $w$ is normal, all elements of $\CM_{[w]}$ are invertible, and the trace $\Tr\CM_{[w]}$ must be nonzero. 
\end{proof}

In particular, binary invertible necklaces have an odd number of $1$'s. However, note that, in general, having a non-zero trace is not a sufficient condition for an element to be normal.

Recall that a prime $q$ is called \emph{$p$-rooted} if $p$ is a generator of the cyclic group $\GF_{q}^*$. For example, $5$ is $2$-rooted since 
$\{2^i\mod 5 \mid i=1,\ldots,4\}=\GF_{5}^*$, 
while $7$ is not $2$-rooted since 
$\{2^i\mod 7\mid i=1,\ldots,6\}=\{1,2,4\}$.

In \cite{CTR01}, the authors obtained the following result:

\begin{theorem}[\cite{CTR01}]\label{thm:remarkable}
    Let $n$ be a positive integer. Every monic irreducible polynomial of degree $n$ over $\GF_p$ with nonzero trace is normal if and only if $n$ is either a power of $p$ or a $p$-rooted prime.
\end{theorem}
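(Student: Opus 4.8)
The plan is to translate the three hypotheses---irreducible of degree $n$, nonzero trace, normal---into the language of the $\GF_p[x]$-module structure of $\GF_{p^n}$, and then to read off the answer from the factorization of $x^n-1$ over $\GF_p$. Write $n=p^a m$ with $\gcd(m,p)=1$, and factor $x^n-1=\prod_i f_i^{p^a}$, where the $f_i$ are the distinct monic irreducible factors of $x^m-1$; one of them is $f_0=x-1$, and for each $i$ let $o_i$ be the multiplicative order of the roots of $f_i$, so that $f_i\mid x^k-1$ exactly when $o_i\mid k$. As $i$ ranges over the factors, the orders $o_i$ realize every divisor of $m$. By the normal basis theorem, $\GF_{p^n}$ is a free rank-one module over $R:=\GF_p[x]/(x^n-1)$, with $x$ acting as the Frobenius $\sigma\colon\alpha\mapsto\alpha^p$; fixing a free generator identifies each $\alpha\in\GF_{p^n}$ with a class $g\in R$. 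I would then set $b_i=v_{f_i}(g)$, the valuation of $g$ at $f_i$ (capped at $p^a$, with $b_i=p^a$ meaning $g\equiv 0$ in that component).

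First I would record the dictionary. The element $\alpha$ is \emph{normal} iff $g$ is a unit of $R$, i.e.\ $b_i=0$ for all $i$. By additive Hilbert~90 the trace-zero elements are exactly the image of $\sigma-1$, i.e.\ the ideal $(x-1)R$, so $\alpha$ has \emph{nonzero trace} iff $b_0=0$. Finally, $\alpha$ lies in the subfield $\GF_{p^e}$ (for $e\mid n$) iff $(x^n-1)\mid (x^e-1)g$, which at each factor reads $p^a\le v_{f_i}(x^e-1)+b_i$; writing $[P]$ for $1$ if $P$ holds and $0$ otherwise, one has $v_{f_i}(x^e-1)=p^{v_p(e)}\,[\,o_i\mid e/p^{v_p(e)}\,]$. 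Hence the \emph{degree} of $\alpha$ is the least $e\mid n$ satisfying $p^a\le v_{f_i}(x^e-1)+b_i$ for every $i$, and $\alpha$ has degree exactly $n$ iff no proper divisor $e\mid n$ works; this last property is governed entirely by the $b_i$ and the orders $o_i$.

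With the dictionary in hand, the $(\Leftarrow)$ direction is immediate. If $n=p^a$ then $x^n-1=(x-1)^{p^a}$ has the single factor $f_0$, so nonzero trace ($b_0=0$) already forces $\alpha$ normal. If $n=q$ is a $p$-rooted prime then $a=0$ and $x^q-1=(x-1)\Phi_q$ with $\Phi_q$ irreducible of order $q$; a degree-$q$ element must be nonzero at the order-$q$ factor (otherwise it would lie in $\GF_p$), so $b_1=0$, while nonzero trace gives $b_0=0$, and $\alpha$ is normal. For $(\Rightarrow)$ I would argue by contraposition, building via the CRT decomposition $R\cong\prod_i\GF_p[x]/(f_i^{p^a})$ an $\alpha$ with $b_0=0$ (nonzero trace), some $b_i\ge 1$ with $i\ne 0$ (non-normal), and degree exactly $n$. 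If $p\mid n$ but $n$ is not a power of $p$ (so $m>1$), take $b_0=0$, $b_1=1$ at one nontrivial factor, and $b_i=0$ otherwise: every $b_i<p^a$, so the conditions at $f_0$ and at the factor of order $m$ rule out every proper $e$ and force degree $n$. If $p\nmid n$ then $a=0$, and either $m$ is composite---keep the order-$m$ factor and $x-1$ but kill a factor of order $\ell<m$, so the $\mathrm{lcm}$ of the surviving orders is still $m=n$---or $m=q$ is a prime that is not $p$-rooted, in which case $\Phi_q$ splits into at least two irreducible factors of order $q$ and one may be killed while another survives, again leaving degree $q$.

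The routine part is the dictionary; the delicate point, and the main obstacle, is the degree-exactly-$n$ verification in the converse: one must ensure that the chosen non-normal $g$ does not accidentally collapse into a proper subfield. This is exactly where the structural fact that the orders $o_i$ exhaust all divisors of $m$ is essential, and it is also what isolates the two good cases---a single repeated factor $x-1$, or the clean splitting $x^q-1=(x-1)\Phi_q$ when $p$ is a primitive root mod $q$---from all the others, where either the multiplicity $p^a>1$ or a duplicated/redundant cyclotomic order always leaves enough room to remove a factor without lowering the degree.
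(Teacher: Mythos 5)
The paper itself gives no proof of this statement---it is imported directly from \cite{CTR01}---so there is no in-paper argument to compare against; judged on its own, your proof is correct and essentially complete as a sketch. Your dictionary is the right one: with $\GF_{p^n}$ viewed as a free rank-one module over $R=\GF_p[x]/(x^n-1)$ via a fixed normal generator (normal basis theorem), normality of $\alpha$ becomes invertibility of $g$, i.e.\ all $b_i=0$; nonzero trace becomes $b_0=0$ by additive Hilbert 90 since $\ker\Tr=(\sigma-1)\GF_{p^n}=(x-1)R$; and $\alpha\in\GF_{p^e}$ becomes $p^a\le v_{f_i}(x^e-1)+b_i$ for all $i$, with $v_{f_i}(x^e-1)=p^{v_p(e)}\,[\,o_i\mid e/p^{v_p(e)}\,]$. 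Both directions check out. For $n=p^a$ the single repeated factor $x-1$ makes nonzero trace equivalent to normality outright; for a $p$-rooted prime $q$ the factorization $x^q-1=(x-1)\cdot(x^q-1)/(x-1)$ into two irreducibles forces $b_1=0$ from the degree-$q$ hypothesis and $b_0=0$ from the trace. In the converse, all three constructions survive the degree-exactly-$n$ verification: when $a\ge1$ and $m>1$, every $b_i\le 1<p^a$, so a proper divisor $e$ fails the subfield test at $f_0$ when $v_p(e)<a$ and at the order-$m$ factor when $v_p(e)=a$; when $a=0$ the degree is exactly $\mathrm{lcm}\{o_i:b_i=0\}$, and killing a factor of order $\ell$ with $1<\ell<m$ (composite case) or one of the $\ge 2$ conjugate irreducible factors of order $q$ (prime, non-$p$-rooted case) leaves that lcm equal to $n$ while producing a non-unit $g$ with $b_0=0$. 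The only items to write out carefully in a polished version are the existence of an $\alpha$ realizing any prescribed CRT components (immediate from freeness of the module) and the well-definedness of the capped valuations $b_i$ in the local rings $\GF_p[x]/(f_i^{p^a})$, which you already handle with the cap-at-$p^a$ convention.
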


The latter result, together with %
Lemma~\ref{lem:trace}, immediately implies the following:

\begin{corollary}\label{cor:ArtinBWT}
     Let $n$ be a positive integer. Every aperiodic necklace of length $n$ over $\Sigma_p$ with non-zero weight modulo $p$ is invertible if and only if $n$ is either a power of $p$ or a $p$-rooted prime.
\end{corollary}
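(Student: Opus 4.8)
The plan is to derive Corollary~\ref{cor:ArtinBWT} as a direct translation of Theorem~\ref{thm:remarkable} into the language of necklaces, using the dictionary already established in this section. The key observation is that Theorem~\ref{thm:invneckeq} and Lemma~\ref{lem:trace} together give a complete bijective correspondence between the combinatorial objects (aperiodic necklaces) and the algebraic objects (irreducible/normal polynomials) appearing in Theorem~\ref{thm:remarkable}. So the entire proof amounts to checking that, under this correspondence, the two hypotheses (``aperiodic necklace with nonzero weight mod $p$'' and ``monic irreducible polynomial with nonzero trace'') match, and that the two conclusions (``invertible'' and ``normal'') match.

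First I would recall the standard fact, already implicit in the discussion preceding Theorem~\ref{thm:remarkable}, that aperiodic necklaces of length $n$ over $\Sigma_p$ are in canonical bijection with monic irreducible polynomials of degree $n$ over $\GF_p$: the words in an aperiodic necklace $[w]$, viewed as vectors, form the orbit of a single element $\alpha\in\GF_{p^n}$ under the Frobenius automorphism $\alpha\mapsto\alpha^p$ (equivalently, under the shift), and this orbit is exactly the set of roots of the minimal polynomial $f$ of $\alpha$, which is irreducible of degree $n$ precisely because the necklace is aperiodic (orbit of maximal size $n$). Next I would match the hypotheses: by Lemma~\ref{lem:trace}, the trace of $f$ is the vector $(m,\dots,m)$ with $m=wt([w])\bmod p$, so $f$ has nonzero trace if and only if $[w]$ has nonzero weight modulo $p$. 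This identifies the hypothesis of Theorem~\ref{thm:remarkable} with that of the corollary.

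Then I would match the conclusions. By the equivalence of conditions~(1) and~(4) in Theorem~\ref{thm:invneckeq}, a necklace $[w]$ is invertible if and only if any (equivalently, every) word in $[w]$ corresponds to a normal element of $\GF_{p^n}$; and a polynomial is normal precisely when it is the minimal polynomial of a normal element. Since the roots of $f$ are exactly the words of $[w]$ viewed as vectors, $f$ is normal if and only if $[w]$ is invertible. Combining these matchings, the statement ``every monic irreducible polynomial of degree $n$ with nonzero trace is normal'' is literally the same as ``every aperiodic necklace of length $n$ with nonzero weight mod $p$ is invertible,'' so the biconditional in the corollary follows immediately from the biconditional in Theorem~\ref{thm:remarkable}, with the arithmetic characterization of $n$ (power of $p$ or $p$-rooted prime) carried over unchanged.

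I do not expect a genuine obstacle here, since this is a dictionary argument rather than a new combinatorial or algebraic fact. The only point requiring care is to confirm that the bijection between necklaces and irreducible polynomials respects both hypotheses \emph{and} both conclusions simultaneously and in a compatible way, i.e.\ that the \emph{same} necklace maps to the \emph{same} polynomial under the trace-matching and the normality-matching; this is automatic because both are phrased in terms of the identical object, namely the set of Frobenius-conjugates of a fixed field element. A secondary subtlety worth flagging explicitly is that Theorem~\ref{thm:invneckeq} gives invertibility in terms of normal \emph{elements} whereas Theorem~\ref{thm:remarkable} is stated for normal \emph{polynomials}; bridging these requires only the definitional remark, already recorded in the text, that a normal polynomial is by definition the minimal polynomial of a normal element, so no additional work is needed.
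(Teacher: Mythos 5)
Your proposal is correct and matches the paper's intent: the paper derives this corollary in one line as an immediate consequence of Theorem~\ref{thm:remarkable} and Lemma~\ref{lem:trace}, and your argument simply spells out the implicit dictionary (aperiodic necklaces as Frobenius orbits, weight versus trace via Lemma~\ref{lem:trace}, invertibility versus normality via Theorem~\ref{thm:invneckeq}) that the paper treats as already established. No gaps; this is the same route, just made explicit.
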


For example, the sequence of $2$-rooted primes is A001122 in OEIS~\cite{oeis}. Therefore, by Corollary~\ref{cor:ArtinBWT}, this sequence precisely corresponds to the lengths $n$ (excluding powers of $2$) for which all binary necklaces of non-zero weight modulo $2$ are invertible. This latter is also the sequence of numbers of the form $2m+1$ such that $(10)^m$ is a BWT image (see \cite{Aulicino01061969,DBLP:journals/tcs/MantaciRRSV17}), i.e., lengths for which one can have a BWT with the largest possible number of runs (which could be considered as the \emph{worst case} of the BWT). Notice that words having BWT of the form $(10)^m$ are, by definition, generalized de Bruijn words.

However, we can state the following conjecture:

\begin{conjecture}\label{conj:ArtinBWT}
Let $p$ be a positive prime number. There exist infinitely many $n$, different from a power of $p$, for which every aperiodic necklace of length $n$ over $\Sigma_p$ with non-zero weight modulo $p$ is invertible.
\end{conjecture}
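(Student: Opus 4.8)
The plan is to translate this purely combinatorial statement into a question of analytic number theory, where it coincides with a celebrated open problem. By Corollary~\ref{cor:ArtinBWT}, an integer $n$ that is \emph{not} a power of $p$ has the property that every aperiodic necklace of length $n$ over $\Sigma_p$ with non-zero weight modulo $p$ is invertible if and only if $n$ is a $p$-rooted prime, i.e.\ a prime $q$ for which $p$ generates the cyclic group $\GF_q^*$. Consequently, the conjecture is equivalent to the assertion that, for every prime $p$, there are infinitely many primes $q$ admitting $p$ as a primitive root. This is precisely Artin's conjecture on primitive roots for the base $a=p$, whose hypotheses are automatically met here, since a prime $p$ is neither a perfect square nor equal to $-1$.

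So the first step is simply to record this equivalence, reducing the whole problem to showing that $p$ is a primitive root modulo infinitely many primes. The second step is to import the known partial progress on Artin's conjecture. Under the Generalized Riemann Hypothesis, Hooley's theorem yields far more than infinitely many such $q$: it gives a positive asymptotic density of them among all primes (Artin's constant, corrected by an explicit factor depending on $p$), thereby settling the conjecture conditionally for every prime $p$ at once. Unconditionally, the results of Gupta--Murty and of Heath-Brown apply: Heath-Brown proved that Artin's conjecture can fail for at most two prime bases, so the present conjecture holds unconditionally for all primes $p$ outside a set of at most two exceptions.

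The hard part, and indeed the obstruction to an unconditional theorem, is that current methods cannot certify Artin's conjecture for any single prescribed base. No integer is known to be a primitive root modulo infinitely many primes without assuming GRH, and Heath-Brown's argument, being an existence statement about the set of exceptional bases, does not pinpoint which (if any) of the at most two exceptions are genuine; in particular it cannot verify the conjecture for $p=2$ or $p=3$ individually. A fully unconditional proof covering every prime $p$ would therefore require a breakthrough on Artin's conjecture itself, which appears to lie beyond the reach of the sieve-theoretic and character-sum techniques presently available. For this reason we leave the statement as a conjecture, supported conditionally (under GRH) for all $p$ and unconditionally for all but at most two primes $p$.
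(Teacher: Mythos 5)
Your proposal is correct and matches the paper's own treatment: the statement is a genuine conjecture, and the paper's only ``proof''-adjacent content is the corollary immediately following it, which records exactly the equivalence you derive from Corollary~\ref{cor:ArtinBWT} --- namely that the claim is the restriction of Artin's primitive-root conjecture to prime bases. Your additional survey of the conditional and partial results (Hooley under GRH, Heath-Brown's at-most-two-exceptions theorem) goes slightly beyond the paper, which only cites Hooley in a footnote, but you rightly conclude, as the authors do, that no unconditional proof for a prescribed prime $p$ is currently available.
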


The previous conjecture is related to a famous conjecture of Emil Artin:

\begin{conjecture}\label{conj:Artin}
Let $a$ be an integer that is not a square number and not $-1$. The set of $a$-rooted primes is infinite.\footnote{The full statement of the conjecture also includes the density of the $a$-rooted primes under certain conditions. See~\cite{Hooley+1967+209+220} for a more detailed overview.}    
\end{conjecture}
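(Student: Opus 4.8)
This is (a special case of) Artin's primitive root conjecture, so an \emph{unconditional} proof is out of reach; the realistic plan is to reconstruct Hooley's argument conditionally on the Generalized Riemann Hypothesis (GRH) and to isolate exactly where it is used. Fix $a$ as in the statement and let $N_a(x)$ count the primes $q\le x$ for which $a$ generates $\GF_q^*$. First I would record the classical reformulation: for primes $q$ not dividing $a$, the element $a$ is a primitive root modulo $q$ if and only if, for every prime $\ell\mid q-1$, the residue $a$ is \emph{not} an $\ell$-th power modulo $q$. Introducing the Kummer extensions $K_k=\mathbb{Q}(\zeta_k,a^{1/k})$ for squarefree $k$, the event ``$\ell\mid q-1$ and $a$ is an $\ell$-th power residue'' is equivalent to $q$ splitting completely in $K_\ell$, which recasts the whole problem in terms of the splitting of $q$ in the tower of fields $K_k$.

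Next I would set up the inclusion--exclusion
\begin{equation*}
N_a(x)=\sum_{k\ge 1}\mu(k)\,P_a(x,k),
\end{equation*}
where $P_a(x,k)$ is the number of primes $q\le x$ splitting completely in $K_k$. By the Chebotarev density theorem each term satisfies $P_a(x,k)\sim \mathrm{Li}(x)/[K_k:\mathbb{Q}]$, and summing the main terms would yield the candidate density
\begin{equation*}
A(a)=\sum_{k\ge 1}\frac{\mu(k)}{[K_k:\mathbb{Q}]}.
\end{equation*}
The hypotheses that $a$ is neither $-1$ nor a perfect square ensure that the degrees $[K_k:\mathbb{Q}]$ take their generic value $k\,\phi(k)$ (up to the known entanglement correction), so that $A(a)$ is a positive rational multiple of Artin's constant $\prod_{\ell}\bigl(1-\tfrac{1}{\ell(\ell-1)}\bigr)$. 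Once $A(a)>0$ is established, $N_a(x)\to\infty$ follows, and with it the infinitude of $a$-rooted primes.

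The entire difficulty lies in turning the formal inclusion--exclusion into a rigorous asymptotic by controlling the Chebotarev error terms uniformly in $k$. I would split the sum into a short range $k\le\tfrac16\log x$, an intermediate range up to about $\sqrt{x}/\log^{2}x$, and a long tail. On the intermediate range a Brun--Titchmarsh bound for primes $q\equiv 1\pmod k$ suffices, and the tail is disposed of by the elementary observation that $q\le x$ with $k\mid q-1$ leaves few primes. The genuine obstacle is the short range: there one needs an \emph{effective} Chebotarev theorem for each $K_k$ with an error term of size $O\bigl(\sqrt{x}\,\log(x\cdot\mathrm{disc}(K_k))\bigr)$, and this is precisely the input that GRH for the Dedekind zeta functions $\zeta_{K_k}$ supplies. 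The hard part---indeed the reason this remains a conjecture---is that no unconditional effective Chebotarev estimate is strong enough to be summed over all $k$ in this range; removing the GRH hypothesis is the step I expect to be insurmountable with present methods, which is why only much weaker unconditional statements (such as Heath-Brown's result that Artin's conjecture can fail for at most two prime values of $a$) are currently known.
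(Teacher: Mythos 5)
This statement is Artin's primitive root conjecture, which the paper deliberately presents as a \emph{conjecture}: it offers no proof and only cites Hooley's survey in a footnote, precisely because no proof is known. So the honest answer is that there is nothing in the paper to compare your argument against, and your writeup is not a proof of the statement either --- it is a (largely accurate) reconstruction of Hooley's 1967 \emph{conditional} proof. The reformulation via non-$\ell$-th-power residues, the Kummer fields $K_k=\mathbb{Q}(\zeta_k,a^{1/k})$, the inclusion--exclusion $N_a(x)=\sum_k\mu(k)P_a(x,k)$, the three-range splitting with Brun--Titchmarsh in the middle range, and the use of GRH for the Dedekind zeta functions of the $K_k$ to get an effective Chebotarev theorem in the short range: all of this is the standard and correct account, and your remark about Heath-Brown's unconditional ``at most two exceptional primes'' result is also accurate. (One minor point: positivity of the density $A(a)$ requires a little care with the entanglement correction when the squarefree part of $a$ is $\equiv 1\pmod 4$, but you acknowledge this, and in the paper's application $a=p$ is a positive prime, so the relevant restriction is mild.)

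The genuine gap, which you yourself identify, is that the entire argument rests on the unproven Generalized Riemann Hypothesis; without it, no known effective Chebotarev estimate is uniform enough in $k$ to control the short range of the inclusion--exclusion. Consequently your proposal establishes the statement only conditionally and cannot be spliced in as a proof of the conjecture. Within the paper, the correct posture is exactly the one the authors take: state it as a conjecture, cite Hooley, and note (as in their Corollary) that their Conjecture~\ref{conj:ArtinBWT} on invertible necklaces is \emph{equivalent} to the restriction of this conjecture to positive primes, rather than a consequence of a proof of it.
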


Let $p$ be a positive prime number. From Corollary~\ref{cor:ArtinBWT}, the lengths $n$ which are not a power of $p$ such that every necklace over $\Sigma_p$ with non-zero weight modulo $p$ is invertible are exactly the set of $p$-rooted primes. We therefore have the following:

\begin{corollary}
Conjecture~\ref{conj:ArtinBWT} is equivalent to the restriction of Conjecture~\ref{conj:Artin} to positive primes.
\end{corollary}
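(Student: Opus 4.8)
The plan is to translate both conjectures into a single statement about the infinitude of the set of $p$-rooted primes and to observe that the two translations coincide verbatim; the entire combinatorial content has already been packaged into Corollary~\ref{cor:ArtinBWT}, so the argument is essentially a bookkeeping of quantifiers.

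First I would fix a positive prime $p$ and check that the hypotheses of Conjecture~\ref{conj:Artin} are automatically satisfied for $a=p$: a positive prime is never equal to $-1$, and it is never a perfect square, since a prime cannot be written as $q^2$. Hence the restriction of Conjecture~\ref{conj:Artin} to the value $a=p$ is exactly the assertion that the set of $p$-rooted primes is infinite. Next I would invoke Corollary~\ref{cor:ArtinBWT}, which states that, for this fixed $p$, a length $n$ has the property that \emph{every} aperiodic necklace of length $n$ over $\Sigma_p$ with non-zero weight modulo $p$ is invertible if and only if $n$ is a power of $p$ or a $p$-rooted prime. Restricting attention to lengths $n$ that are \emph{not} a power of $p$, this property therefore holds exactly for the $p$-rooted primes. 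I would record the (immediate) fact that no $p$-rooted prime is itself a power of $p$: the only prime that is a power of $p$ is $p$ itself, and $p$ does not generate $\GF_p^*$ because $p\equiv 0 \pmod p$ is not even an element of $\GF_p^*$; so the two families are disjoint and the exclusion of prime powers removes nothing. Consequently, Conjecture~\ref{conj:ArtinBWT} for the prime $p$---the assertion that there are infinitely many $n$ different from a power of $p$ with the all-invertible property---is equivalent to the assertion that there are infinitely many $p$-rooted primes.

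Finally I would place the two translations side by side: for each fixed positive prime $p$, Conjecture~\ref{conj:ArtinBWT} and Conjecture~\ref{conj:Artin} with $a=p$ are literally the same statement, namely that $p$-rooted primes are infinite in number. Letting $p$ range over all positive primes yields the claimed equivalence between Conjecture~\ref{conj:ArtinBWT} and the restriction of Conjecture~\ref{conj:Artin} to positive primes. There is no genuine obstacle in this proof; the only steps requiring a moment's care are verifying that $a=p$ meets the hypotheses of Artin's conjecture and that the set of $p$-rooted primes is disjoint from the powers of $p$, both of which are settled in one line.
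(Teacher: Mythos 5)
Your proposal is correct and follows essentially the same route as the paper, which likewise derives the corollary in one line from Corollary~\ref{cor:ArtinBWT} by noting that the lengths $n$ that are not powers of $p$ with the all-invertible property are exactly the $p$-rooted primes, so both conjectures reduce to the infinitude of that set. Your added checks (that $a=p$ satisfies the hypotheses of Artin's conjecture and that no $p$-rooted prime is a power of $p$) are correct and merely make explicit what the paper leaves implicit.
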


\section{Generalized de Bruijn words and invertible necklaces}\label{sec:final}

In this section, we further explore the connection between the content of Sections~\ref{sec:db} and~\ref{sec:invertible}. Specifically, we demonstrate that over an alphabet whose size is a prime number, generalized de Bruijn words and invertible necklaces are linked not only through their connection to the Burrows–Wheeler transform, but also via an isomorphism of abelian groups.

First, recall that every finite abelian group $G$ is isomorphic to the direct sum of cyclic groups, i.e., $G\cong \bigoplus_i \Z_{d_i}$. One has, for example, $\RG_2^4 \cong \mathbb{Z}_2$ and $\RG_2^8 \cong \mathbb{Z}_2^2 \oplus \mathbb{Z}_4$ (remember that the order of  $\RG_p^n$ is $\Phi_p(n)/n$).

 Chan, Hollmann and Pasechnik~\cite{ECCGTA13} proved that the structure of Reutenauer groups can be found by means of an isomorphism with the sandpile groups of generalized de Bruijn graphs, as reported in the next theorem.
 
 \begin{theorem}[\cite{ECCGTA13}]\label{thm:KDB-RG}
    Let $K(\DB(p,n))$ be the sandpile group of the generalized de Bruijn graph $\DB(p,n)$, $p$ prime. Then 
    \[ K(\DB(p,n)) \oplus \Z_{p-1} \cong \RG_p^{n},\] 
    where $\RG_p^n\cong C(p,n)/\langle Q_n \rangle$ is the $n$-th Reutenauer group over $\GF_p$.

    Thus, for $p$ prime,  $K(\DB(p,n))\cong  C(p, n)/(\Z_{p-1}\times \Z_n)$, and therefore  $\kappa(DB(p,n))=\dfrac{\Phi_p(n)}{n(p-1)}$.
 \end{theorem}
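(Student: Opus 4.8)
The plan is to treat the group isomorphism $K(\DB(p,n)) \oplus \Z_{p-1} \cong \RG_p^n$ of~\cite{ECCGTA13} as the single external input, and to derive from it both the explicit description $K(\DB(p,n)) \cong C(p,n)/(\Z_{p-1}\times\Z_n)$ and the cardinality formula, using the identification $\RG_p^n = C(p,n)/\langle Q_n\rangle$ together with the counts $|C(p,n)| = \Phi_p(n)$ and $|\langle Q_n\rangle| = n$ recorded above. The key idea is to realize the $\Z_{p-1}$ that must be cancelled as a concrete subgroup of $C(p,n)$, namely the scalar matrices, and to split it off cleanly.

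First I would introduce the row-sum homomorphism. Viewing a circulant matrix as the class of its first-row polynomial in $R = \GF_p[X]/(X^n-1)$, evaluation at $X=1$ is a ring homomorphism $R \to \GF_p$, and restricting to units gives a group homomorphism $\phi\colon C(p,n) \to \GF_p^*$ (a unit necessarily has nonzero row-sum, since $c(1)=0$ would force $(X-1)\mid \gcd(c(X),X^n-1)$). The scalar matrices $S = \{\lambda I : \lambda \in \GF_p^*\}$ form a section of $\phi$, so $C(p,n) = C_1 \times S$ with $C_1 = \ker\phi$ and $S \cong \Z_{p-1}$. Since every power $Q_n^k$ has row-sum $1$, one has $\langle Q_n\rangle \le C_1$, while $S \cap \langle Q_n\rangle = \{I\}$ because the identity is the only scalar power of the shift.

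Next I would quotient by $\langle Q_n\rangle$. Because $\langle Q_n\rangle$ lies inside the direct factor $C_1$, one gets $\RG_p^n = C(p,n)/\langle Q_n\rangle \cong (C_1/\langle Q_n\rangle) \oplus \Z_{p-1}$, and likewise $C(p,n)/(S\times\langle Q_n\rangle) \cong C_1/\langle Q_n\rangle$ directly. Comparing the former with the decomposition $\RG_p^n \cong K(\DB(p,n)) \oplus \Z_{p-1}$ of~\cite{ECCGTA13} and cancelling the common $\Z_{p-1}$ summand (legitimate here because all groups involved are finite, so the cancellation property of finite abelian groups applies) yields $K(\DB(p,n)) \cong C_1/\langle Q_n\rangle \cong C(p,n)/(\Z_{p-1}\times\Z_n)$, where $\Z_{p-1}\times\Z_n$ denotes the subgroup $S\times\langle Q_n\rangle$. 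The order statement is then immediate: $\kappa(\DB(p,n)) = |K(\DB(p,n))|$ by the matrix-tree theorem (Theorem~\ref{K}), and $|K(\DB(p,n))| = |C(p,n)|/((p-1)n) = \Phi_p(n)/(n(p-1))$.

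The hard part is not in this derivation but in the cited isomorphism $K(\DB(p,n)) \oplus \Z_{p-1} \cong \RG_p^n$ of~\cite{ECCGTA13}, whose proof matches the Smith normal form of the reduced Laplacian of $\DB(p,n)$ with the module structure of the unit group of $\GF_p[X]/(X^n-1)$ modulo the shift; I would take this as given. Within my derivation the only delicate step is the cancellation of $\Z_{p-1}$: it is precisely finiteness that makes it valid, and invoking it is what frees me from having to verify that the abstract $\Z_{p-1}$ summand of~\cite{ECCGTA13} coincides literally with the image of the scalar subgroup $S$.
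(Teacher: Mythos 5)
Your proposal is correct. The paper itself gives no proof of this statement---it is imported wholesale from~\cite{ECCGTA13}---so there is nothing to compare against line by line; what you have done is take the cited isomorphism $K(\DB(p,n))\oplus\Z_{p-1}\cong\RG_p^n$ as the external input and correctly justify the ``Thus'' and ``therefore'' clauses, which the paper leaves implicit. Your realization of the $\Z_{p-1}$ factor as the scalar matrices via the evaluation-at-$1$ homomorphism on $\GF_p[X]/(X^n-1)$, the observation that $\langle Q_n\rangle$ sits inside its kernel, and the appeal to cancellation of finite direct summands in finite abelian groups are all sound, and the order count $|K(\DB(p,n))|=\Phi_p(n)/(n(p-1))=\kappa(\DB(p,n))$ follows exactly as you say from the matrix-tree theorem.
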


As a consequence of Theorem~\ref{thm:KDB-RG}, we arrive at a remarkable fact: the structure of the Reutenauer groups is entirely determined by the structure of the generalized de Bruijn graphs. An illustrative example is provided in Table~\ref{tabledi}. We point out that the (reduced) Laplacian matrix of a generalized de Bruijn graph (as well as its Smith Normal Form and invariant factors) can be derived directly from the arithmetic definition of the graph, without the need to construct it explicitly.

\begin{table}[ht]
  \begin{center}
    \caption{  The invariant factors of the reduced Laplacian matrix of the generalized de Bruijn graphs for $k=3$, which give the structure of the sandpile groups $K(\DB(3,n))$ and Reutenauer groups $\RG_3^n$.  \label{tabledi}}
    \begin{tabular}{l|l|l|l} %
     $n$ & $d_i$ & $K(\DB(3,n))$ & $\RG_3^n$\\
      \hline
    $3$ & $(1,3)$ & $\Z_3$  & $\Z_3\oplus \Z_2$\\
    $4$ & $(1,1,4)$ & $\Z_4$ & $\Z_4\oplus \Z_2$ \\
    $5$ & $(1, 1, 1, 16)$ & $\Z_{16}$ & $\Z_{16}\oplus \Z_2$\\
    $6$ & $(1, 1, 3, 3, 3)$ & $\Z_3^3$  & $\Z_3^3\oplus \Z_2$\\
    $7$ & $(1, 1, 1, 1, 1, 104)$  & $\Z_{104}$ & $\Z_{104}\oplus \Z_2$\\
    $8$ & $(1, 1, 1, 1, 2, 8, 8)$  & $\Z_{8}^2\oplus \Z_2$  & $\Z_{8}^2\oplus \Z_2^2$\\
    $9$ & $(1, 1, 1, 3, 3, 3, 3, 9)$ & $\Z_{9}\oplus \Z_3^4$ & $\Z_{9}\oplus \Z_3^4\oplus \Z_2$ \\
    $10$ & $(1, 1, 1, 1, 1, 1, 1, 16, 80)$ & $\Z_{80}\oplus \Z_{16}$ & $\Z_{80}\oplus \Z_{16}\oplus \Z_2$\\
    $11$ & $(1, 1, 1, 1, 1, 1, 1, 1, 22, 242)$ & $\Z_{242}\oplus \Z_{22}$ & $\Z_{242}\oplus \Z_{22}\oplus \Z_2$\\
    $12$ & $(1, 1, 1, 1, 3, 3, 3, 3, 3, 3, 12)$ & $\Z_{12}\oplus \Z_{3}^6$ & $\Z_{12}\oplus \Z_{3}^6\oplus \Z_2$
    \end{tabular}
  \end{center}
\end{table}
 
We thus obtain the following formula for the number of generalized de Bruijn words over an alphabet of prime size:

\begin{theorem}
Let $p$ be a prime. For every $n\geq 2$, the number of generalized de Bruijn words of length $pn$ over $\Sigma_p=\{0,1,\ldots,p-1\}$ is 
\begin{equation}\label{eq:DBWprime}
    \DBW_p(pn)=\dfrac{((p-1)!)^n}{p-1}\dfrac{\Phi_p(n)}{n}=\dfrac{((p-1)!)^n}{p-1}\InvNeck_p(n)
\end{equation}
where $\InvNeck_p(n)$ is the number of invertible  necklaces of length $n$ over $\Sigma_p$.
\end{theorem}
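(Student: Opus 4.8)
The plan is to assemble this formula directly from the two counting results already established, since the statement is essentially a corollary rather than an independent theorem. First I would specialize Theorem~\ref{thm:counting} to the prime case $k=p$. This immediately gives
\[ \DBW_p(pn) = ((p-1)!)^n\, \kappa(\DB(p,n)), \]
expressing the number of generalized de Bruijn words of length $pn$ in terms of the complexity (number of oriented spanning trees) of the generalized de Bruijn graph $\DB(p,n)$. This step is unconditional and requires nothing beyond substituting $k=p$ into the already-proved counting identity.

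Next I would invoke the final assertion of Theorem~\ref{thm:KDB-RG}, namely $\kappa(\DB(p,n)) = \frac{\Phi_p(n)}{n(p-1)}$. This value is itself a consequence of the group isomorphism $K(\DB(p,n)) \oplus \Z_{p-1} \cong \RG_p^{n}$ together with the order computations $|K(\DB(p,n))| = \kappa(\DB(p,n))$ (from the matrix--tree theorem) and $|\RG_p^{n}| = \Phi_p(n)/n$. Substituting this closed form into the previous display and factoring out $p-1$ yields the first equality:
\[ \DBW_p(pn) = ((p-1)!)^n \cdot \frac{\Phi_p(n)}{n(p-1)} = \frac{((p-1)!)^n}{p-1}\cdot \frac{\Phi_p(n)}{n}. \]

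Finally, the second equality follows by recalling that the number of invertible necklaces of length $n$ over $\Sigma_p$ coincides with the order of the Reutenauer group $\RG_p^{n}$, which was shown earlier to equal $\Phi_p(n)/n$. Hence $\InvNeck_p(n) = \Phi_p(n)/n$, and the two right-hand sides agree. Since every ingredient is already in place, I do not expect any genuine obstacle: the result is a bookkeeping consequence of Theorems~\ref{thm:counting} and~\ref{thm:KDB-RG} together with the bijection between invertible necklaces and $\RG_p^n$. The only points requiring minor care are verifying that the hypothesis $n\geq 2$ is exactly what guarantees the stated closed form for $\kappa(\DB(p,n))$ and the formula for $\Phi_p(n)$ are valid, and confirming that the specialization of Theorem~\ref{thm:counting} to $k=p$ is legitimate (which it is, as that theorem holds for every alphabet size and in particular for prime $p$).
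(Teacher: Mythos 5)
Your proposal is correct and follows exactly the paper's own argument: specialize Theorem~\ref{thm:counting} to $k=p$, substitute $\kappa(\DB(p,n))=\Phi_p(n)/(n(p-1))$ from Theorem~\ref{thm:KDB-RG}, and identify $\Phi_p(n)/n$ with $\InvNeck_p(n)$ via the order of $\RG_p^n$. No differences worth noting.
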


\begin{proof}
By Theorem~\ref{thm:KDB-RG}, we have $\kappa(\DB(p,n))=\Phi_p(n)/(n(p-1))$. The result then follows from \eqref{eqDBW}.
\end{proof}

\begin{lemma}
 When $n$ is a power of a prime $p$ (which corresponds to the case of ordinary de Bruijn words), the precise structure of the sandpile group (and hence of the Reutenauer group) can be easily given in terms of $p$ and $n$. Indeed, from the results in~\cite{ECCGTA13}, one can derive:

\begin{equation}\label{eq:SandpilePrime}
    K(\DB(p,p^n))\cong \big[ \bigoplus_{i=1}^{n-1}(\Z_{p^i})^{p^{n-1-i}(p^2-2p+1)}\big]\oplus \Z_{p^n}^{p-2},
\end{equation}
and hence, by Theorem~\ref{thm:KDB-RG},
\begin{equation}\label{eq:RG}
RG_p^n\cong \big[ \bigoplus_{i=1}^{n-1}(\Z_{p^i})^{p^{n-1-i}(p^2-2p+1)}\big]\oplus \Z_{p^n}^{p-2}\oplus \Z_{p-1}.
\end{equation}
Note that, as expected, for $p=2$ we obtain \eqref{Levine}.
 \end{lemma}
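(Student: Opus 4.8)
The plan is to reduce the whole statement to the structure of the unit group of a truncated polynomial ring. By Theorem~\ref{thm:KDB-RG}, $K(\DB(p,p^n))\oplus\Z_{p-1}$ is isomorphic to the Reutenauer group $C(p,p^n)/\langle Q_{p^n}\rangle$ of invertible circulants of size $p^n$, so it suffices to describe this quotient; the $\Z_{p-1}$ summand will reappear at the end as the canonical prime-to-$p$ part. Identifying an invertible circulant with the associated unit of $\GF_p[x]/(x^{p^n}-1)$ gives $C(p,p^n)\cong\big(\GF_p[x]/(x^{p^n}-1)\big)^*$. The decisive simplification, special to the ordinary de Bruijn (prime-power) case, is that in characteristic $p$ one has $x^{p^n}-1=(x-1)^{p^n}$; after the substitution $y=x-1$ the ring becomes the \emph{local} ring $R=\GF_p[y]/(y^{p^n})$, whose unit group splits as $R^*\cong\GF_p^*\times U_1$ with $U_1=1+yR$ the group of $1$-units and $\GF_p^*\cong\Z_{p-1}$.

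The key step is to compute $U_1$. I would exploit the characteristic-$p$ identity $(1+ay^j)^p=1+ay^{jp}$ (using $a^p=a$ over $\GF_p$): it shows that the elements $1+y^j$ with $\gcd(j,p)=1$, $1\le j<p^n$, generate $U_1$, form an independent family, and satisfy $\ord(1+y^j)=p^{\,k}$ where $k$ is least with $jp^{k}\ge p^n$, i.e. $k(j)=n-\lfloor\log_p j\rfloor$. Hence $U_1\cong\bigoplus_{1\le j<p^n,\;p\nmid j}\Z_{p^{k(j)}}$. Counting the integers $j$ coprime to $p$ in each interval $[p^{n-i},p^{n-i+1})$, the factor $\Z_{p^i}$ occurs with multiplicity $p^{n-1-i}(p-1)^2$ for $1\le i\le n-1$, while the top factor $\Z_{p^n}$, coming from $j\in\{1,\dots,p-1\}$, occurs with multiplicity $p-1$. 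This is exactly the specialization to the local case of the general circulant computation of~\cite{ECCGTA13}.

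Finally, the cyclic shift $Q_{p^n}$ corresponds to $x=1+y$, the generator indexed by $j=1$, which is one of the $p-1$ cyclic summands of order $p^n$; since it is a genuine direct summand, passing to $U_1/\langle 1+y\rangle$ merely lowers the multiplicity of $\Z_{p^n}$ from $p-1$ to $p-2$. As the quotient is a $p$-group, it must coincide with the $p$-part $K(\DB(p,p^n))$ in the splitting of Theorem~\ref{thm:KDB-RG}, giving~\eqref{eq:SandpilePrime}; reattaching $\Z_{p-1}$ then yields~\eqref{eq:RG}, and setting $p=2$ (so $\Z_{p^n}^{p-2}$ disappears and $(p-1)^2=1$) recovers Levine's formula~\eqref{Levine}, a useful sanity check. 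I expect the main obstacle to be the $1$-unit structure theorem itself—namely proving that the $1+y^j$ form an \emph{independent} generating family with the stated orders, and that $\langle 1+y\rangle$ is a direct summand so that the quotient deletes precisely one copy of $\Z_{p^n}$; once these are established, the interval-counting that produces the multiplicities $p^{n-1-i}(p-1)^2$ is routine.
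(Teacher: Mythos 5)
Your argument is correct, but it takes a genuinely different route from the paper. The paper's proof is a pure bookkeeping exercise: it quotes the general formula for $K(\DB(d,k))$ from Theorem~3.1 of~\cite{ECCGTA13}, specializes to $d=p$, $k=p^n$, and merges the resulting direct sums to obtain the exponents $p^{n-1-i}(p^2-2p+1)$. You instead re-derive that special case from scratch: you identify $C(p,p^n)$ with $\bigl(\GF_p[x]/(x^{p^n}-1)\bigr)^*$, use $x^{p^n}-1=(x-1)^{p^n}$ to pass to the local ring $\GF_p[y]/(y^{p^n})$, compute the $1$-units $U_1$ via the generators $1+y^j$ ($p\nmid j$), and then quotient by $\langle Q_{p^n}\rangle=\langle 1+y\rangle$, which deletes one $\Z_{p^n}$ summand; Theorem~\ref{thm:KDB-RG} and cancellation of the $\Z_{p-1}$ part then yield~\eqref{eq:SandpilePrime}. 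Your multiplicity count is right ($j$ coprime to $p$ in $[p^{n-i},p^{n-i+1})$ gives $p^{n-i-1}(p-1)^2$ copies of $\Z_{p^i}$, and $j\in\{1,\dots,p-1\}$ gives $p-1$ copies of $\Z_{p^n}$, reduced to $p-2$ after the quotient), and it matches the paper's simplified exponents exactly. The one ingredient you defer---that the $1+y^j$ are independent with $\ord(1+y^j)=p^{k(j)}$---is a standard fact and is genuinely closable: since every $1\le \ell<p^n$ factors uniquely as $\ell=jp^i$ with $p\nmid j$, one has $\sum_{p\nmid j}k(j)=p^n-1=\log_p|U_1|$, and the classes of $(1+y^j)^{p^i}$ generate each graded piece $U_\ell/U_{\ell+1}$, so the product of the cyclic subgroups is all of $U_1$ and the order count forces independence; the same decomposition shows $\langle 1+y\rangle$ is a direct summand. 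The trade-off is clear: the paper's proof is shorter but leans on the full strength of the Chan--Hollmann--Pasechnik structure theorem, whereas yours only needs Theorem~\ref{thm:KDB-RG} plus elementary local-ring arithmetic, and it explains \emph{why} the prime-power case is special (the group algebra becomes local). One cosmetic point worth noting: in~\eqref{eq:RG} the left-hand side should really read $\RG_p^{p^n}$ rather than $\RG_p^n$ (your $C(p,p^n)/\langle Q_{p^n}\rangle$ is the correct object); this is an inconsistency in the statement, not in your proof.
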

 \begin{proof}
We use the formula from~\cite{ECCGTA13} (Theorem 3.1), that describes the group $K(\DB(d,k))$ for arbitrary $d,k$, in the case where $d=p$ is prime and $k=p^n$ for $n\ge 1$:

\begin{align*}
 K(\DB(p,p^n))%&\cong \bigoplus_{i=0}^{n-1}\big[\Z_{p^{i}}\oplus \Z_{p^{i+1}}^{n_i-2n_{i+1}+n_{i+2}-1} \big]\\
 &\cong \big[ \bigoplus_{i=1}^{n-1}\Z_{p^i}\big]\oplus \big[ \bigoplus_{i=0}^{n-2}\Z_{p^{i+1}}^{p^{n-i}-2p^{n-i-1}+p^{n-i-2}-1} \big]\oplus \Z_{p^n}^{p^{n-(n-1)}-2p^{n-(n-1)-1}+p^{n-(n-1)-1}-1} \\             &\cong \big[ \bigoplus_{i=1}^{n-1}\Z_{p^i}\big]\oplus \big[ \bigoplus_{i=0}^{n-2}\Z_{p^{i+1}}^{p^{n-i-2}(p^2-2p+1)-1} \big]\oplus \Z_{p^n}^{p-2} \\
 &\cong \big[ \bigoplus_{i=1}^{n-1}\Z_{p^i}\big]\oplus \big[ \bigoplus_{i=1}^{n-1}\Z_{p^{i}}^{p^{n-i-1}(p^2-2p+1)-1} \big]\oplus \Z_{p^n}^{p-2} \\
 &\cong \big[ \bigoplus_{i=1}^{n-1}\Z_{p^{i}}^{p^{n-i-1}(p^2-2p+1)} \big]\oplus \Z_{p^n}^{p-2} 
 \end{align*}
\end{proof}

\begin{remark}
Let $pn=p^d$, $d> 1$, i.e., $n=p^{d-1}$. Since $\Phi_p(p^{d-1})=p^{p^{d-1}}\dfrac{p-1}{p}$, from \eqref{eq:DBWprime} we get 
\[\DBW_p(p^d)=\dfrac{((p-1)!)^{p^{d-1}}p^{p^{d-1}}}{pp^{d-1}}=\dfrac{(p!)^{p^{d-1}}}{p^d}\]
 i.e., the number of (ordinary) de Bruijn words of order $d$ over $\Sigma_p$.
\end{remark}

In the special case $p=2$, we have, by Theorem~\ref{thm:KDB-RG}, that $K(\DB(2,n))\cong \RG_2^n$.
As a consequence, we have the following:

\begin{theorem}\label{thm:final1}
    For every $n\geq 1$, the following sets are in bijection: 
    \begin{itemize}
        \item The set of binary generalized de Bruijn words of length $2n$;
        \item The set of binary invertible necklaces of length $n$;
        \item The set of (unordered) normal bases of $\GF_{2^{n}}$.
    \end{itemize}
\end{theorem}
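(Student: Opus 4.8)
The plan is to establish the three bijections by chaining together results already proved in the excerpt, rather than constructing anything new from scratch. The statement asserts that for every $n\geq 1$ three sets are in bijection: binary generalized de Bruijn words of length $2n$, binary invertible necklaces of length $n$, and unordered normal bases of $\GF_{2^n}$. I would prove this by exhibiting explicit bijections between consecutive pairs and noting that bijection is transitive.

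**First**, I would handle the bijection between binary invertible necklaces of length $n$ and unordered normal bases of $\GF_{2^n}$. This follows almost immediately from Theorem~\ref{thm:invneckeq}: an invertible necklace $[w]$ over $\Sigma_2$ corresponds, by the equivalence of conditions (1) and (4), to a conjugacy class of normal elements of $\GF_{2^n}$. Since the Frobenius orbit of a normal element $\gamma$ is exactly the set $\{\gamma,\gamma^2,\ldots,\gamma^{2^{n-1}}\}$ — i.e., the normal basis it generates — and this orbit corresponds precisely to the necklace, each invertible necklace names exactly one unordered normal basis, and conversely. I would remark that aperiodicity (which invertibility forces, since an invertible circulant matrix must have full-size orbit) guarantees the orbit has exactly $n$ distinct elements, so the counting is clean: both sets have cardinality $\Phi_2(n)/n = \InvNeck_2(n)$.

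**Next**, I would establish the bijection between binary generalized de Bruijn words of length $2n$ and binary invertible necklaces of length $n$. This is where the group-theoretic input enters. By Theorem~\ref{thm:counting} specialized to $p=2$ (where $((2-1)!)^n = 1$), the number of binary generalized de Bruijn words of length $2n$ equals $\kappa(\DB(2,n))$; and by Theorem~\ref{thm:KDB-RG} with $p=2$, we have $K(\DB(2,n))\cong \RG_2^n$ since $\Z_{p-1}=\Z_1$ is trivial. Taking orders, $\kappa(\DB(2,n)) = |K(\DB(2,n))| = |\RG_2^n| = \Phi_2(n)/n$, which is exactly the number of binary invertible necklaces. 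This gives equality of cardinalities, hence the existence of a bijection.

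**The main obstacle** — and the point I would be careful to address — is that the theorem claims a \emph{bijection}, whereas the $p=2$ argument above only yields equal cardinalities through the isomorphism of abelian groups, not a canonical correspondence at the level of individual objects. For finite sets, equal cardinality does suffice for the existence of a bijection, so the statement as phrased is proved; but I would want to flag whether a \emph{natural} bijection is intended. A cleaner route, which I would pursue if a canonical map is desired, is to track objects directly: each binary generalized de Bruijn word of length $2n$ is, by Theorem~\ref{thm:charGen}, the image under $i\mapsto\lfloor i/n\rfloor$ of a Hamiltonian cycle in $\DB(2,2n)$, equivalently an Eulerian cycle in $\DB(2,n)$; and the sandpile-group isomorphism of Theorem~\ref{thm:KDB-RG} is what transports these to the Reutenauer group $\RG_2^n$, i.e., to invertible necklaces. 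I would therefore structure the proof as: (i) cite Theorem~\ref{thm:invneckeq} for the necklaces-to-normal-bases bijection; (ii) cite Theorem~\ref{thm:counting} and Theorem~\ref{thm:KDB-RG} at $p=2$ to match the remaining cardinalities; and (iii) conclude, noting explicitly that the count $\Phi_2(n)/n$ is common to all three sets.
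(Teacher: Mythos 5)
Your proposal is correct and follows essentially the same route as the paper, which derives Theorem~\ref{thm:final1} by specializing Theorem~\ref{thm:KDB-RG} to $p=2$ (so $K(\DB(2,n))\cong\RG_2^n$), combining it with the counting formula of Theorem~\ref{thm:counting}, and using the necklace--normal-basis correspondence of Theorem~\ref{thm:invneckeq}; all three sets then have cardinality $\Phi_2(n)/n$. Your observation that the de Bruijn-words-to-necklaces correspondence is only a cardinality match rather than a canonical bijection is also in line with the paper, which explicitly leaves the construction of an explicit bijection as an open problem in the remark following Theorem~\ref{thm:final2}.
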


 In particular, when $n=2^d$ for some $d$, we have a bijection between the set of binary de Bruijn words of length $2^{d+1}$ and the set of binary necklaces of length $2^d$ having an odd number of $1$'s. This follows from Corollary~\ref{cor:ArtinBWT}. Indeed, notice that we do not need to add the hypothesis that the necklaces are aperiodic, since their lengths are a power of $2$. 

\begin{theorem}\label{thm:final2}
    For every $d\geq 1$, the following sets are in bijection: 
    \begin{itemize}
        \item The set of binary de Bruijn words of order $d+1$;
        \item The set of binary necklaces of length $2^d$ having an odd number of $1$'s;
         \item The set of (unordered) normal bases of $\GF_{2^{2^d}}$.
    \end{itemize}
\end{theorem}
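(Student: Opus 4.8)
The plan is to obtain Theorem~\ref{thm:final2} as the specialization of Theorem~\ref{thm:final1} to $n=2^d$, together with two reinterpretations of the sets involved. Setting $n=2^d$ in Theorem~\ref{thm:final1} directly produces bijections between the binary generalized de Bruijn words of length $2^{d+1}$, the binary invertible necklaces of length $2^d$, and the (unordered) normal bases of $\GF_{2^{2^d}}$. The last of these coincides verbatim with the third set in the statement, so the work reduces to matching the first two descriptions.

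First I would reconcile the de Bruijn words. Since $2^{d+1}$ is a power of $2$, the generalized de Bruijn graph $\DB(2,2^{d+1})$ is an ordinary de Bruijn graph, and by Higgins's characterization (recalled before the definition of generalized de Bruijn words) a binary word of length $2^{d+1}$ is an ordinary de Bruijn word precisely when its BWT is an alphabet-permutation power. As this is exactly the defining property of a generalized de Bruijn word, the binary generalized de Bruijn words of length $2^{d+1}$ are exactly the binary de Bruijn words of order $d+1$; this identification is essentially definitional.

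Next I would reconcile the necklaces using Corollary~\ref{cor:ArtinBWT} with $p=2$ and $n=2^d$. Since $2^d$ is a power of $2$, that corollary asserts that every aperiodic necklace of length $2^d$ with odd weight is invertible; conversely, every invertible necklace is aperiodic (a periodic necklace has a repeated row in its circulant matrix, hence a singular one) and has odd weight by Lemma~\ref{lem:trace}. Thus the binary invertible necklaces of length $2^d$ are exactly the aperiodic binary necklaces of length $2^d$ with an odd number of $1$'s.

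The one point requiring care --- and the only genuine obstacle --- is that the statement omits the aperiodicity hypothesis, referring merely to binary necklaces of length $2^d$ with an odd number of $1$'s. Here the hard part is to argue that odd weight already forces aperiodicity in this length: if such a necklace were periodic, its primitive root would have length a proper divisor of $2^d$, so the number of repetitions would be a power of $2$ at least $2$, hence even, forcing the total weight to be even, a contradiction. Therefore odd-weight necklaces of length $2^d$ are automatically aperiodic, so the two descriptions of the second set agree. Composing this with the previous identification and the bijections of Theorem~\ref{thm:final1} yields the three desired bijections.
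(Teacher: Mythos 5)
Your proposal is correct and follows essentially the same route as the paper: specialize Theorem~\ref{thm:final1} to $n=2^d$, identify generalized de Bruijn words of length $2^{d+1}$ with ordinary de Bruijn words of order $d+1$ via Higgins's characterization, and use Corollary~\ref{cor:ArtinBWT} together with the observation that odd weight forces aperiodicity when the length is a power of $2$. You spell out the aperiodicity argument and the converse inclusions (via Lemma~\ref{lem:trace} and singularity of circulant matrices of periodic words) in more detail than the paper, which only remarks that the aperiodicity hypothesis can be dropped, but the substance is identical.
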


\begin{remark}
Any constructive bijection would allow one to derive algorithms for interchanging between these objects. One could, for example, use such a bijection to generate every binary de Bruijn word with uniform probability, a problem that is still open---in \cite{DBLP:conf/latin/LiptakP24} the authors presented an efficient algorithm to generate every binary de Bruijn word with positive probability.
Moreover, this could be made efficient in practice since the sum in $\GF_2$ is nothing else than the XOR.
\end{remark}

\section{Conclusions and Open Problems}\label{sec:conclusion}

As emphasized in the introduction, the central motivation of our work is to present a new formalism, based on combinatorics on words, that bridges various algebraic structures. This approach builds on known results relating sandpile groups of (generalized) de Bruijn graphs and groups of invertible matrices over finite fields (Reutenauer groups). The new classes of words we introduce in this paper---generalized de Bruijn words on the one hand, and conjugacy classes of words (necklaces) with invertible Burrows--Wheeler matrix on the other hand---offer a fresh reinterpretation of these connections in terms of words. The key advantage of the new framework is its foundation in the Burrows--Wheeler transform, which not only clarifies links to longstanding open problems in elementary number theory but also opens new avenues for further research.

 We expect that different research communities, including those focused on combinatorics on words, commutative algebra, combinatorial algorithms, finite fields, and cryptography, will build upon the new constructions introduced in this paper.
For instance, it would be interesting to develop combinatorial characterizations of the newly introduced classes of words. Another compelling direction is to gain a deeper understanding of the group operation on invertible necklaces and investigate its relationship to the one recently introduced by Zamboni in the context of Christoffel words~\cite{DBLP:journals/corr/abs-2409-07974}, and further explored in~\cite{DBLP:journals/corr/abs-2409-09824}.

\bibliography{ref}
\end{document}